
\documentclass[11pt,reqno]{amsart}
\usepackage{graphicx}
\usepackage{amsmath,amsthm,amssymb,enumerate, esint}
\usepackage{color}
\usepackage{wrapfig}
\usepackage{caption}
\usepackage{geometry}


\usepackage{xstring}


\newcommand{\BR}[1]{%
    \IfEqCase{#1}{%
        {6}{\color{black}} 
    }[\PackageError{BC}{Undefined option to tree: #1}{}]%
}
\newcommand{\ER}{\color{black}}

\newtheorem{theorem}{Theorem}
\newtheorem{proposition}{Proposition}[section]
\newtheorem{lemma}{Lemma}[section]


\providecommand{\uoset}[3]{\underset{\phantom{#1}}{\overset{#2}{#3}}}

\newcommand{\R}{\mathbb{R}}
\providecommand{\oh}{\frac{1}{2}}
\providecommand{\ud}{\, \mathrm{d}}

\providecommand{\dx}{\ud x}

\providecommand{\dt}{\ud \theta}
\providecommand{\dr}{\ud r}

\providecommand{\bv}{\bar v}
\providecommand{\bw}{\bar w}

\providecommand{\W}{W_{\textrm{rel}}}
\providecommand{\h}{h_0}

\DeclareMathOperator*{\argmin}{arg\,min}

\begin{document}


\title{Wrinkling of a thin circular sheet bonded to a spherical substrate}

\author{Peter Bella}
\address{Institute of Mathematics, Leipzig University,\\
Augustusplatz 10, 04109 Leipzig, Germany}\email{bella@math.uni-leipzig.de}
\author{Robert V. Kohn}
\address{Courant Institute of Mathematical Sciences, New York University, 251 Mercer Street, New York, NY 10012, USA}\email{kohn@cims.nyu.edu}

\begin{abstract}
We consider a disk-shaped thin elastic sheet bonded to a compliant sphere.
(Our sheet can slip along the sphere; the bonding controls only its
normal displacement.) If the bonding is stiff (but not too stiff), the geometry of the sphere
makes the sheet wrinkle to avoid azimuthal compression. The total energy of this system is the elastic energy of the sheet plus a (Winkler-type) substrate energy. Treating the~thickness of the sheet $h$ as a small parameter, we determine the leading-order behavior of the energy as $h$ tends to zero, and give (almost matching) upper and lower bounds for the next-order correction. Our analysis of the leading-order behavior determines the~macroscopic deformation of the sheet; in particular it determines the extent of the wrinkled region, and predicts the (nontrivial) radial strain of the sheet. 
The~leading-order behavior also provides insight about the length 
scale of the wrinkling, showing that it must be approximately
independent of the~distance~$r$ from the center of the sheet (so that the 
number of wrinkles must increase with $r$). Our results on the next-order 
correction provide insight about how the wrinkling pattern should vary with $r$. 
Roughly speaking, they suggest that the length scale of wrinkling should
{\it not} be exactly constant -- rather, it should vary a bit, so that  
the number of wrinkles at radius $r$ can be approximately piecewise constant
in its dependence on $r$, taking values that are integer 
multiples of $h^{-a}$ with $a \approx 1/2$.
\end{abstract}

\maketitle



\section{Introduction}
We model the wrinkling of a disk-shaped elastic sheet bonded to a compliant sphere, 
as shown schematically in Figure~\ref{fig1}. The source of the wrinkling is easy to 
understand: if we assume for a moment that the sheet is inextensible in the radial 
direction and that the center of the disk is attached to the north pole, then each 
circle $|x|=r$ is approximately mapped to the circle $S_r$ on the sphere at distance 
$r$ from the north pole. Since the arclength of $S_r$ is less than $2\pi r$, circles 
$|x|=r$ must wrinkle to avoid (large) compression. Roughly: the typical slope of the 
wrinkling is determined by the geometry of the sphere (the contrast between $2\pi r$ and 
$|S_r|$), while the~wavelength is determined by competition between the bending energy 
(which prefers coarse, large-amplitude wrinkling) and the substrate energy (which prefers 
small deformations, hence fine, low-amplitude wrinkling).
 
The preceding account is oversimplified. Our sheet is not inextensible, and we permit
it to slip along the sphere. By stretching a bit in the radial direction the
sheet can reduce the energetic cost of wrinkling, since the circle
$|x|=r$ is then approximately mapped to a circle on the sphere slightly
longer than $S_r$. As we'll explain in due course, the macroscopic deformation
of our sheet is determined by \BR6 the \ER competition between membrane effects (which prefer
less stretching) and the~energetic cost of wrinkling (which prefers more stretching).

 \begin{wrapfigure}[24]{l}{6cm}
  \includegraphics[scale=0.4]{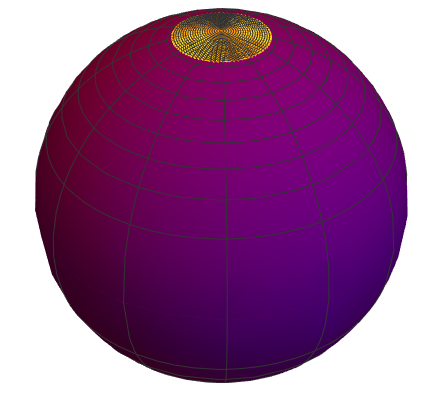}
   \includegraphics[scale=0.4]{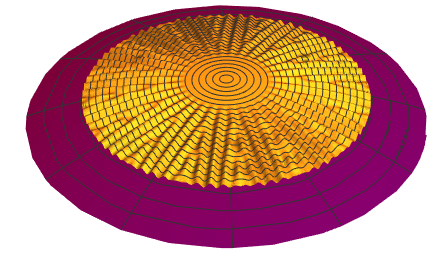}
 \vspace{-14pt}
  \caption{Circular sheet on a ball}\label{fig1}
 \end{wrapfigure}

The behavior of thin elastic sheets experiencing compression due to geometric 
effects has recently received a lot of attention. Without attempting a comprehensive review, let us mention studies concerning a sheet on a deformable sphere~\cite{HohlfeldDavidovitch, Paulsen++,DavidovitchGrasonSun}; indentation of a pressurized ball~\cite{VellaEbrahimi+2015}; indentation of a floating sheet~\cite{VellaHuang+2015, Paulsen++}; 
wrinkling of a stamped plate~\cite{hure2012stamping};
and crystalline sheets on curved surfaces~\cite{GrasonDavidovitch,meng2014elastic}. 
Among these references the paper~\cite{HohlfeldDavidovitch} deserves
special note, since (as we explain in Section~\ref{sec:model}) our model is
particularly close to the one considered there. 


It is well known that with increasing compression a thin elastic sheet undergoes an 
instability (like Euler buckling), the onset of which is well-understood 
using linear analysis (this is \BR6 the\ER~so~called ``near-threshold'' (NT) regime). 
As the compression increases one enters a~different, ``far-from-threshold'' (FT) regime (see e.g.~\cite{benny1}), in which predictions from the linear 
theory cease to be valid. In contrast with \BR6 the \ER NT regime, in the FT regime 
the sheet (almost) completely releases the~compressive stresses by 
deforming out-of-plane (e.g. by wrinkling). The~wrinkling wavelength 
is then set by \BR6 a \ER competition between the bending resistance (which prefers 
long wavelengths) and mechanisms favoring short wavelengths (e.g. tension, 
curvature along the~wrinkles, and adhesion to a substrate). The natural 
goals in the FT regime are to predict the wavelength of wrinkles (by 
deriving a so called ``local $\lambda$-law''\BR6 ~\cite{cerdamaha1,Paulsen++}) \ER and/or 
to predict the~macroscopic deformation of the sheet. These goals are the primary focus of many of the papers cited above  
~\cite{benny1,GrasonDavidovitch,HohlfeldDavidovitch,Paulsen++,VellaEbrahimi+2015,VellaHuang+2015}.

While our goal in the present paper is very similar, there is an unexpected twist 
compared to the aforementioned work. There the energy consists of a {\it dominant} 
part which decides the~macroscopic deformation, and a {\it subdominant} part which controls the scale of the wrinkling. Put differently: 
in the limit of vanishing thickness the wrinkling does not cost any energy (since 
the energetic contribution from wrinkling is subdominant), and the macroscopic deformation of the sheet can be obtained via tension-field theory (in mathematical language: by 
minimizing a~relaxed functional). In contrast, in the problem we consider the 
cost of wrinkling is comparable to other terms in the dominant energy; as a result 
one cannot use tension-field theory or solve a~relaxed problem to predict the macroscopic 
deformation of the sheet. Instead, one must minimize an~\emph{effective} 
functional, in which the elastic energy of radial tension competes with 
the (substrate + bending) energy of circumferential wrinkling. 
Since the energetic cost of wrinkling contributes to the leading order term 
in the energy, 
minimization of the effective energy determines (at least approximately) the length
scale of wrinkling at radius $r$.  
Our problem has this character because we consider a stiff elastic substrate, 
quite different in character from the~liquid substrate considered in~\cite{VellaHuang+2015}
and stiffer than the relatively compliant Winkler foundations considered 
in~\cite{HohlfeldDavidovitch,Paulsen++,VellaEbrahimi+2015,GrasonDavidovitch} 
(see Section~\ref{sec:model} for more about this).

As already mentioned, the minimum of our effective functional determines the 
macroscopic deformation and the limiting energy as the thickness $h \rightarrow 0$. 
But more: it gives a {\it lower bound} for the energy $E_h$ when $h$ is positive. (For the precise definition of $E_h$ see~\eqref{energy} below.) 
The amount by which $E_h$ exceeds the minimum of the effective functional is informative; 
therefore the estimation of this {\it excess energy} is a major focus of our work. Our main
mathematical result, Theorem~\ref{thm}, provides upper and lower bounds on the excess energy, 
showing (very roughly speaking) that it is approximately linear in $h$. 

Our estimate of the excess energy has implications for the fine-scale structure of the
wrinkling. This is because $E_h$ includes the cost of changing the wrinkling pattern as a 
function of the~distance $r$ from the center of the sheet, whereas the effective 
functional ignores this cost. Indeed, the~effective functional estimates the 
length scale of wrinkling by balancing the azimuthal bending against the substrate term; 
this leads to the conclusion (found also in~\cite{HohlfeldDavidovitch}) that the~scale
of the~wrinkling should be approximately proportional to $h$ (independent of $r$). 
It follows that the number of wrinkles at $|x|=r$ should increase approximately linearly
with $r$. Our analysis of the upper bound on the excess energy shows that the length scale
of wrinkling should {\it not} be exactly constant -- rather, it 
should vary a bit, so that the number of wrinkles at radius $r$ can be 
approximately piecewise constant in its dependence on $r$, taking values that are integer 
multiples of $h^{-a}$ with $a \approx 1/2$.

The picture that emerges has a lot of symmetry: the macroscopic deformation (determined by
minimizing the effective energy) involves radial tension, and the number of wrinkles at radius
$r$ is an approximately (but not exactly) linear function of $r$. This symmetry is 
a {\it conclusion}, not a~hypothesis, of our analysis. It is of course crucial that our sheet is
disk-shaped, and that the~sphere is a body of revolution around the axis determined by the center
of the sheet.

It is not a new idea that energy scaling laws can be used to identify the wrinkled region 
and to explain the local length scale of wrinkling. The best-understood examples are 
problems where the geometry is simple (typically flat) and the direction of wrinkling 
is fixed by some source of uniaxial tension (e.g. a stretched annular 
sheet~\cite{BellaKohnCPAM,BellaARMA} or a hanging drape~\cite{BellaKohnDrapes}).
Problems involving biaxial compression are less well-understood, though there has 
been progress in special cases (e.g. the shape of a blister in a~compressed thin 
film~\cite{blisters-linear, jin-sternberg,blisters2}, and the herringbone pattern 
seen in a~compressed thin film bonded to compliant substrate~\cite{kohn-nguyen}). 
The problem considered here involves compression, but its geometry is rather 
controlled due to the presence of the substrate and the use of von K\'arm\'an theory. 

The paper is organized as follows. Section~\ref{sec:model} presents our model, states our main mathematical results (Theorem~\ref{thm}), 
and provides further discussion about their implications. Sections~\ref{sec:lb} and~\ref{sec:prop} 
prove the lower-bound half of Theorem~\ref{thm}. The argument relies on certain properties of 
the~minimizers of some one-dimensional calculus of variations problems closely related to our
effective functional. Section~\ref{sec:lb} states the required properties in 
Proposition~\ref{prop} then uses them to prove the bound, while Section~\ref{sec:prop}
provides the proof of Proposition~\ref{prop}. Finally, Section~\ref{sec:ub} proves the upper-bound
half of Theorem~\ref{thm}. This is done by identifying an explicit wrinkling pattern (varying 
appropriately with $r$) with relatively small excess energy. While the pattern given there is 
not the energy minimizer (we do not solve an Euler-Lagrange equation), it provides an indication 
about how a wrinkling pattern should look in order to achieve the minimum energy scaling law. 

\section{The model and the main results}\label{sec:model}

We consider a thin elastic sheet of circular shape with thickness $h > 0$ and radius $r_0 > 0$, 
which sits on an elastic ball of radius $R \gg r_0$. The energy of the system has three terms: 
the {\it membrane} energy of the sheet, which measures deviation from the deformation being an 
isometry; the {\it bending} energy of the sheet, which penalizes curvature; and 
a {\it substrate} energy, which prefers the sheet to be sphere-shaped. 
For the membrane term we use a F\"oppl-von K\'arm\'an model (taking Poisson's ratio
equal to zero for simplicity); for the substrate term we use a Winkler foundation. Focusing on
the energy per unit thickness and normalizing by the Young's modulus of the sheet, our elastic energy
functional is 
\begin{equation}\label{energy}
E_h(u,\xi) := \int_\Omega 
\biggl|e(u) + \oh \nabla \xi \otimes \nabla \xi\biggr|^2 + 
h^2 |\nabla\nabla \xi|^2 \dx + 
\alpha_s h^{-2} \int_{\Omega} \biggl|\xi + \frac{|x|^2}{2R} \biggr|^2 \dx.
\end{equation}
Here $\Omega$ denotes the disk of radius $r_0$, centered at the origin; $u$ and $\xi$ are
the in-plane and out-of-plane displacements of the sheet, and $e(u) = (\nabla u + (\nabla u)^T)/2$ 
denotes the linear strain associated with $u$. The nondimensional constant $\alpha_s$ 
(which we assume is strictly positive) determines the~relative stiffness of the substrate 
compared to that of the film. Since the substrate term involves only $\xi$ and not $u$, our model
requires that the sheet conform to the sphere, but permits it to slide along the sphere. (The 
standard F\"oppl-von K\'arm\'an bending term would be $\frac{1}{12} |\nabla \nabla \xi|^2$; we have
dropped the factor $1/12$ for notational simplicity. This simplification does not change the problem significantly, though it affects the precise form of the effective functional.)

Our energy~(\ref{energy}) is very similar to the one considered by Hohlfeld and Davidovitch
in~\cite{HohlfeldDavidovitch}. There are, however, two significant differences: (1) their 
energy has an additional term, representing surface tension, which induces a state of (small) radial
tension; (2) they focus on the~limit of ``asymptotic isometry'', which is achieved when 
both the surface tension and our $\alpha_s$ tend to $0$. Our situation is different, because
we take $\alpha_s$ to be nonzero (and fixed) as $h \rightarrow 0$. As a~result, our sheet does not
achieve asymptotic isometry (despite the absence of surface tension); rather, it is in a 
state of radial tension.
\BR6
To make the comparison with~\cite{HohlfeldDavidovitch} more explicit: their substrate term is 
$\frac{K}{E_f h} \int_{\Omega} \big|\xi + \frac{|x|^2}{2R} \big|^2 \dx$ -- the same as ours except that coefficient is $\frac{K}{E_f h}$ instead of
$\alpha_s h^{-2}$. (Here $E_f$ is the elastic modulus of the sheet and
$K$ is a constant determinicng the stiffness of their substrate.) Thus,
their analysis (sending $h \rightarrow 0$ while holding $K$ fixed)
corresponds, in our notation, to considering $\alpha_s = \frac{K}{E_f} h$.
This is {\it not} the regime we consider; rather, our $\alpha_s$ is
fixed and positive as $h \rightarrow 0$.
\ER

We choose to work in dimensional variables: since $u$, $\xi$, and $h$ have the dimensions of 
length, our energy~(\ref{energy}) has dimension $\mbox{length}^2$. However the problem would not be
significantly different if we worked in nondimensional variables, replacing $x$ by $x/R$, etc. 
The nondimensional version of the energy looks the same as~(\ref{energy}) except that the domain is
a ball of radius $r_0/R$, the factor in front of $|\nabla \nabla \xi|^2$ is $(h/R)^2$, and 
the substrate term is $\alpha_s (h/R)^{-2} \int \bigl| \xi + \frac{|x|^2}{2} \bigr|^2 \dx$. 

This model can be criticized on the ground that the Winkler substrate term is somewhat idealized. 
A more realistic model of a film bonded to a uniform elastic ball would replace our Winkler term with
a nonlocal expression involving the $H^{1/2}$ norm of the surface displacement (see for 
example~\cite{kohn-nguyen} or Section 3 of~\cite{audoly2008buckling}). This suggests
replacing our Winkler term by 
\begin{equation} \label{nonlocal-substrate-term}
\alpha_s h^{-1} \biggl\|\xi(\cdot) + \frac{|\cdot|^2}{2R} \biggr\|_{H^{1/2}(\Omega)}^2
\end{equation}
(which scales once again like $\mbox{length}^2$). In fact our analysis of the lower 
bound would also work for this nonlocal substrate term. However our analysis of the upper
bound requires estimating the energy of an explicitly-given deformation, which would be much
more difficult for a nonlocal model. Moreover the use of~(\ref{nonlocal-substrate-term}) would not
eliminate another key idealization, namely that the~sheet is free to slip along the substrate.
In addition, our goal is to consider a thought-experiment not a physical experiment: how 
does geometry induce wrinkling, when a thin elastic sheet is required to conform to a sphere? 
With these considerations in mind, we take the view that \BR6 the\ER~Winkler model used in~(\ref{energy})
is appropriate for our purposes. 

\begin{wrapfigure}[18]{l}{5cm}
\includegraphics[width=5cm]{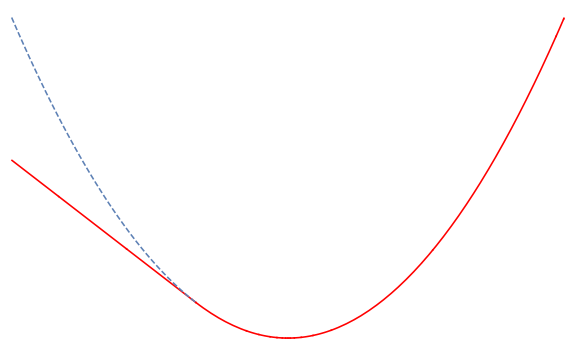}
\caption{The graph of $\W(\eta)$, superimposed on the parabola
$\eta^2$. In the region corresponding to wrinkling $\W(\eta)$ is linear;
for values of $\eta$ corresponding to an unwrinkled state, $\W(\eta)=\eta^2$.}
\label{fig:wrel}
\end{wrapfigure}

In fact, Winkler-type substrate terms have been used to model many experiments. 
In~\cite{HohlfeldDavidovitch, Paulsen++, VellaEbrahimi+2015, GrasonDavidovitch} 
(see also \cite{VellaHuang+2015}) such a term arises from the gravitational potential of 
the fluid below the sheet. There is, however, an important difference: in that work the prefactor scales like $h^{-1}$, whereas in~(\ref{energy}) it scales like $h^{-2}$. 
By considering one-dimensional examples, one sees that in our setting the cost of 
wrinkling (determined by optimizing the length scale, based on competition between the 
bending and substrate terms) is $O(1)$ (half way between $h^2$ and $h^{-2}$); this is why it
contributes to the leading-order energy. 
In~\cite{HohlfeldDavidovitch, Paulsen++, VellaEbrahimi+2015, GrasonDavidovitch}, by contrast, 
the different scaling makes the cost of wrinkling $o(1)$ (so it is subdominant).

We turn now to a discussion of our results. They involve (1) identification of an \emph{effective
functional} $F_0$, whose minimum determines the radial strain in the sheet, the approximate
length scale of wrinkling, and the limiting behavior of the minimum elastic energy ($\min E_h$)
as $h \rightarrow 0$; and (2) upper and lower bounds for the \emph{excess energy}, defined as the 
difference between $\min E_h$ and $\min F_0$.

To describe the effective functional we must first discuss the energetic cost of wrinkling.
As explained in the~introduction, wrinkling is a way for a circle $|x|=r$ to fit into 
less space. We shall show that the elastic cost of such a~circle ``shrinking by amount $-\eta$'' is 
well-approximated by $\W(\eta)$, where
\begin{equation} \label{wrel}
\W (\eta) :=
\begin{cases} \eta^2  & \textrm{if }\eta \ge -2 \alpha_s^{1/2}, \\ 
-4 \alpha_s^{\oh} (\alpha_s^{\oh} + \eta) & \textrm{if } \eta \le -2 \alpha_s^{1/2}. 
\end{cases}
\end{equation}
More precisely, $\W(\eta)$ estimates (with very small error) the cost of fitting a 
circle of length $1-\eta$ into a unit interval. If $\eta > 0$ (or slightly negative), it is 
optimal to deform the~curve uniformly in plane; in this regime the cost is entirely
membrane energy and $\W(\eta) = \eta^2$. However if $\eta$ is negative enough, it is 
better to deform the curve out-of-plane (to \emph{waste arclength by wrinkling} so to speak); in 
this regime the cost is obtained by optimizing the length scale of the wrinkling, and the resulting
formula is linear in $\eta$. The function $\W(\eta)$ is continuously differentiable, though its 
second derivative is discontinuous at the boundary between the two regimes (see Figure~\ref{fig:wrel}).

As noted early in the Introduction, we expect the sheet to be in a state of radial tension, since stretching a bit in the radial direction reduces the energetic cost of wrinkling. The effective
functional $F_0$ captures this effect, by keeping (only) the energy due to radial stretching and 
our estimate for the cost of wrinkling:
\begin{equation}\label{fnull}
F_0(\bv) := \int_0^{r_0} 
\biggl[ \biggl(\bv' + \frac{r^2}{2R^2} \biggr)^2 + \W(\bv / r)\biggr] r \dr.
\end{equation}
Here $\bv$ is a real-valued function of one variable ($r=|x|$), constrained by the boundary condition
$\bv(0) =0$. We shall show that minimization of this one-dimensional variational problem 
provides a~good estimate for the radial deformation of the sheet. In addition, it provides a lot of
information about the~length scale of the wrinkling at radius $r$ (via the analysis that led to
$\W$).

Assuming that the sheet actually wrinkles (i.e. assuming that the excess length of the outer 
circles is large enough to induce wrinkling rather than compression), the cost of changing
the length scale of wrinkling from circle to circle does not enter the leading-order energy; 
rather, it contributes to the principal correction (the excess energy). Since we are 
interested in how the~wrinkling pattern changes with $r$, it is crucial to understand the
principal correction. In fact it seems to be of order $h$ (more precisely: we have a lower 
bound that is linear in $h$, and an upper bound that is almost linear in $h$). 

Our main mathematical result is the following characterization of the leading-order energy 
and the principal correction:

\begin{theorem}\label{thm}
Assume $\alpha_s \in (0,2^{-8} (\frac{r_0}{R})^4)$, and let $\epsilon$ be the 
\emph{excess energy}, defined by
\begin{equation}\nonumber
\epsilon := \inf_{u,\xi} E_h(u,\xi) - \min_{\bv(0)=0} F_0(\bv).
\end{equation}
Then there exist constants $\h > 0$, $c_0 > 0$, and $c_1 < \infty$, all depending on 
$\alpha_s, r_0$, and $R$, such that for any $h \in (0,\h)$ the excess 
energy $\epsilon$ satisfies
\begin{equation}\nonumber
c_0 h \le \epsilon \le \kappa\left(\frac 1h\right) h,
\end{equation}
where the correction 
$\kappa(t) := \exp\Big(c_1\left(\log t \right)^{1/2} \log \big(\log t\big)\Big)$ 
grows slower than $t^{\alpha}$ for any $\alpha > 0$.
\end{theorem}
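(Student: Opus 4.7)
The proof splits into two halves: the lower bound $\epsilon \ge c_0 h$ and the upper bound $\epsilon \le \kappa(1/h) h$. Following the outline given in the Introduction, the plan for the lower bound is to reduce the two-dimensional problem to a family of one-dimensional radial problems via an angular Fourier decomposition and then to invoke Proposition~\ref{prop}. The plan for the upper bound is to construct an explicit test deformation whose wrinkling pattern is piecewise constant in a carefully tuned way.

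\textbf{Lower bound.} The plan is to expand $\xi(r,\theta) = \sum_n \xi_n(r) e^{in\theta}$ and likewise for $u$, so that the bending and substrate terms of $E_h$ diagonalize in $n$. The zero-mode part of $u$ and $\xi$, together with the azimuthal averages forced by the membrane term, reproduces the radial stretching integral in $F_0$. For each $n \ne 0$, the per-mode cost, when minimized over the amplitude $|\xi_n(r)|^2$ subject to matching a prescribed radial shrinkage $-\bv(r)/r$ through the azimuthal mean of $\tfrac12 (\partial_\theta \xi/r)^2$, reproduces the wrinkling integrand $\W(\bv/r)$ in the continuous (non-integer) optimum. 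This already yields $\epsilon \ge 0$; the extra $c_0 h$ comes from the fact that $n$ is restricted to integers, so the ideal wavenumber $n^*(r) = r\alpha_s^{1/4}/h$ cannot be followed pointwise as $r$ varies. Proposition~\ref{prop} quantifies precisely this trade-off for the associated relaxed one-dimensional problems; summing its conclusion mode by mode delivers the bound $\epsilon \ge c_0 h$.

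\textbf{Upper bound.} Starting from a minimizer $\bv^*$ of $F_0$, I would construct the ansatz
\[ \xi(r,\theta) = -\frac{r^2}{2R} + A(r)\cos\bigl(N(r)\theta\bigr), \]
with $u$ chosen so that the leading-order membrane integrand $|e(u) + \tfrac12 \nabla\xi \otimes \nabla\xi|^2$ matches the $F_0$ prediction in azimuthal average. Here $N(r)$ is piecewise constant, takes values that are integer multiples of $h^{-a}$ with $a \approx 1/2$, and the partition of $[0,r_0]$ is chosen so that on each piece $N(r)$ best approximates the ideal $n^*(r) = r\alpha_s^{1/4}/h$ available among those multiples; the amplitude $A(r)$ is then fixed by requiring the azimuthal mean of $\tfrac12 (\partial_\theta \xi/r)^2$ to equal the shrinkage $-\bv^*(r)/r$. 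Estimating each of the three terms of $E_h$ on this ansatz, and inserting at each jump of $N(r)$ a narrow transition layer where the two neighboring amplitudes are interpolated, yields the bound $\epsilon \le \kappa(1/h) h$; the choice $a \approx 1/2$ is precisely what balances the within-piece wavelength-mismatch cost against the per-transition cost.

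\textbf{Main obstacle.} I expect the hard part to be the lower bound, and within it the derivation of Proposition~\ref{prop} together with the passage from the 2D functional to the relaxed 1D problems. Three interlocking difficulties must be faced: (i) the membrane term is quartic in $\xi$ and couples distinct angular modes through $\tfrac12 \nabla\xi \otimes \nabla\xi$, which must be controlled without losing the $O(h)$ correction; (ii) the zero mode of $u$ must be treated self-consistently so that the 1D functional emerging from the reduction is exactly $F_0$ rather than some looser upper bound on it; and (iii) the $O(h)$ correction itself must be extracted from the integrality of $n$, which is delicate because $\W$ is only $C^1$ and its optimum degenerates as $\eta$ approaches the wrinkling threshold $-2\alpha_s^{1/2}$, so any standard second-variation argument must be adapted to the crease in $\W$ and to radii where the wrinkled and unwrinkled regimes meet.
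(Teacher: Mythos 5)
Your lower-bound plan is broadly in the right family — Fourier decomposition in $\theta$, reduction to a radial problem, and invocation of Proposition~\ref{prop} — but you have misidentified the mechanism producing the $O(h)$ correction. It does \emph{not} come from the integrality of the azimuthal wavenumber $n$: nothing in the paper's Lemma~\ref{lm1} exploits the discreteness of the Fourier index in an essential way. The true source is the \emph{radial inhomogeneity} of the optimal wavenumber $k^*(r) = \alpha_s^{1/4} r/h$, which changes by $\sim h^{-1/2}$ over any radial interval of length $\sim h^{1/2}$. Lemma~\ref{lm1} runs a dichotomy on such intervals: either the Fourier content of $w(\rho_i,\cdot)$ at the two endpoints is not concentrated near $k^*(\rho_i)$ — in which case the pointwise excess $W_r - \W$ is already large — or it is concentrated near the (different) optima at both ends, in which case the coefficients $a_k(r)$ must shift, and $B(r)=\sum_k(\partial_r a_k)^2$, which enters the energy through the cross term $2\sigma_h B$ coming from~\eqref{e1.5}, picks up the cost. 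Covering a fixed fraction of the wrinkled region by $\sim h^{-1/2}$ such intervals gives $\epsilon\gtrsim h$. Your framing in terms of integer restriction would, if it worked, give a much smaller correction and in any case is not what is proved.

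More seriously, your upper-bound construction is precisely the ``first pass'' that the paper sets up in Section~\ref{sec:ub} and shows to fail. A single-mode ansatz $A(r)\cos(N(r)\theta)$ with $N(r)$ piecewise constant and narrow interpolation layers at the jumps forces, inside each layer, a two-mode profile; the product $\cos(k_1\theta)\cos(k_2\theta)$ in $(\partial_\theta w)^2$ produces a beat term $\cos((k_1-k_2)\theta)$ with $|k_1-k_2|\sim h^{-1/2}$. Upon integrating in $\theta$ to choose $u_\theta$ (so as to kill the second term of $R_h$), this beat contributes a piece of size $\sim h^{1/2}$ to $u_\theta$ which varies with $r$ on scale $h^{1/2}$, hence $\partial_r u_\theta\sim 1$, and the third (shear) term $T_3$ of $R_h$ accumulates $O(1)$ total — not $O(\kappa(1/h)h)$. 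The paper's actual construction avoids the discrete piecewise-constant-plus-layer structure entirely: it uses, at every $r$, a \emph{window} of $\sim h^{-\delta}$ simultaneously active frequencies (all multiples of $N=h^{\delta-1/2}$), weighted by a smooth mask $m$ whose center drifts continuously with $r$. The point is that the dangerous beat-frequency portion of $u_{\theta,-}$ is then approximately $r$-periodic (period $h^{1/2+\delta}$), and the smoothness of $m$ is bootstrapped to show that it has very small radial derivative, so $T_3$ stays small. Without that shift-invariance structure your ansatz cannot reach the stated bound, and the balance you describe between ``within-piece mismatch'' and ``per-transition cost'' is not the relevant one.
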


Since the minimum of the effective functional gives the leading-order elastic energy as 
$h \rightarrow 0$, it is important to note that its minimizer can be made entirely explicit. 
In fact, assuming as in Theorem~\ref{thm} that $\alpha_s < 2^{-8} (\frac{r_0}{R})^4$ and using
the Euler-Lagrange equation for $F_0$, it is straightforward to compute that 
\begin{equation}\begin{aligned}\label{vone}
\argmin_{\bv(0)=0} F_0 = \bv_0(r) &:= \begin{cases} 
\frac{-3}{16} \frac{r^3}{R^2} + \left(2\alpha_s^{\frac12} (r_0/r_w - 1) + 
\frac{1}{16} \frac{r_w^2}{R^2} \right)r & \quad r \in (0,r_w)\\
-2\alpha_s^{\frac12}r - \frac16 \left( \frac{r^3 - r_w^3}{R^2} \right) + 
2 \alpha_s^{\frac12}r_0 \log \left(\frac{r}{r_w}\right) & \quad r \in (r_w,r_0),
\end{cases}
\end{aligned}\end{equation}
with $r_w := (16 \alpha_s^{\frac12} r_0 R^2)^{\frac13}$. The first interval $(0,r_w)$ 
in~\eqref{vone} corresponds to the unwrinkled region, while the second interval 
$(r_w,r_0)$ corresponds to the wrinkled region. The condition 
$\alpha_s < 2^{-8} (\frac{r_0}{R})^4$ is equivalent to $r_w < r_0$, i.e. it 
ensures that the wrinkled region is nontrivial. 

To explain the relationship between our effective functional $F_0$ and the elastic energy $E_h$, 
we note that $F_0$ is closely related to the functional $F_h$, defined for 
$(\bv,\bw) : (0,r_0) \to \R$ by
\begin{equation}\label{fh}
F_h(\bv,\bw) := \int_0^{r_0} 
\biggl[ \left(\bv' + \oh(\frac{r}{R} - \bw')^2 \right)^2 + \alpha_s h^{-2} |\bw|^2 + 
h^2 \left|\bw''-\frac1R\right|^2 + \W(\bv / r)\biggr] r \dr.
\end{equation}
Indeed, the term $\alpha_s h^{-2} |\bw|^2$ prefers $\bw = 0$, and $F_h(\bv,0)$ differs 
from $F_0(\bv)$ by a term that's $O(h^2)$. This rather formal calculation will be justified
by Proposition~\ref{prop}, which shows (among other things) that $|\min F_0 - \min F_h| \le Ch^2$.
Thus (by the triangle inequality) we can replace $\min F_0$ by $\min F_h$ in the definition of the
excess energy. The functional $F_h$, in turn, is a lower bound for the elastic energy. This will be
explained in Section~\ref{sec:lb}; briefly, it follows from Plancherel's formula ($\bv$ represents 
the azimuthal average of the radial displacement, while $\bw$ represents the azimuthal average 
of $\xi + |x|^2/2R$). Incidentally, when wrinkling is not expected (i.e. if the~minimizer $(\bv_h,\bw_h)$ of $F_h$ satisfies $\bv_h \ge -2\alpha^{1/2}r$ for $r \in (0,r_0)$), 
the arguments in Section~\ref{sec:lb} show that the~radial extension of $(\bv_h,\bw_h)$ minimizes 
$E_h$, so that $\min E_h = \min F_h$. 

To explain how the minimizer of the effective functional provides information about the~length scale of wrinkling we must say more about our analysis of energetic cost of wrinkling
(the~calculation behind $\W$, defined by~(\ref{wrel})). Its starting point is an expression
for 
$$
W_r = \mbox{azimuthal strain} + \mbox{substrate energy} + \mbox{azimuthal bending energy}
$$
integrated over the circle $|x|=r$ (for the precise definition see~\eqref{Wenergy}). It depends
not only on $\eta$ (the argument of $\W$) but also on the normal displacement of the sheet (at 
radius $r$), viewed as a~function of $\theta$. The formula (\ref{wrel}) is obtained by 
optimizing the displacement. But this calculation gives more than just the optimum; in particular,
it reveals the extra energy $W_r - \W$ associated with a non-optimal choice of the wrinkling. 
The exact expression is~(\ref{e2.28}), but the main point is this: if $a_k(r)$ is the $k$th
Fourier coefficient of $\xi(r,\cdot)$ then $W_r - \W$ is at least 
$$
\sum_{k \neq 0} a_k^2 \left(\frac{k}{r}\right)^2 
\left(\frac{h|k|}{r} - \alpha_s^{\oh} \frac{r}{|k|h}\right)^2.
$$
This suggests wrinkling with frequency $k$ such that 
$\frac{h|k|}{r} = \alpha_s^{1/2} \frac{r}{|k|h}$, i.e. $|k|=\alpha_s^{1/4} r/h$, 
a choice that makes the length scale of wrinkling independent of $r$ and 
proportional to $h$. (Essentially the~same calculation can be found 
in~\cite{HohlfeldDavidovitch}.) Our conclusions about the length scale of wrinkling are
not a surprise. Indeed, since this length scale is set by the competition between 
the bending and substrate terms, it should not depend on $r$. Since the wrinkles
are arranged radially, it follows that the number per circle (the wavenumber $|k|$) 
should increase linearly with $r$. 

The preceding calculation considers each circle separately. But we have already mentioned
that $E_h$ is strictly above the minimum of $F_0$, because the radial variation of the 
wrinkling pattern costs additional (``excess'') energy, whose magnitude is at least of
order $h$. So our identification of an ``optimal'' $k$ should not be taken literally. Rather, 
we expect a wrinkling pattern such that $W_r - \W$ is of order $h$. Thus: we
expect that $|\frac{hk}{r} - \alpha_s^{1/2} \frac{r}{kh}|$ should be at most of order
$h^{1/2}$ on the support of $a_k$. 

In Section~\ref{sec:ub} we provide an explicit wrinkling pattern whose excess energy is 
approximately linear in $h$. The observation in the last paragraph -- that the active $k$
at radius $r$ can be a certain distance from $\alpha_s^{1/4} r/h$ -- turns out to be crucial. 
Since changing the wrinkling pattern costs elastic energy, the successful construction avoids
changing it too often. Very roughly speaking, the active $k$ is a function of $r$ which 
approximates the linear function $\alpha_s^{1/4} r/h$ but takes only values that are integer
multiples of $h^{-a}$ with $a \approx 1/2$. An account of why such a construction seems
necessary is provided at the beginning of Section~\ref{sec:ub}.

\BR6
One feature of our story is a little bit surprising. A wrinkled surface resists curvature in the~direction parallel to the wrinkles. In some recent studies of wrinkling this effect plays a crucial role in determining the local length scale \cite{Paulsen++,taffetani}. Our problem is different from the ones in those studies; the key differences (as noted earlier) are that (1) the coefficient of our substrate term scales like $\alpha_s h^{-2}$ with $\alpha_s$ fixed and positive as $h \rightarrow 0$, and therefore (2) the leading-order behavior in our setting is not obtained from tension-field theory or a relaxed problem, but rather by our effective functional. It is nevertheless natural to wonder at what order the effect of curvature is felt as $h \rightarrow 0$.  Since our upper and lower bounds on the leading-order correction
(the scaling of $\min E_h - \min F_0$ with respect to $h$) do not match, our results are consistent with the possibility that curvature effects might play a role at this order. As for the length scale of wrinkling: while our results show that it is approximately constant (this is required to get the correct leading-order behavior), its small deviations from constancy might be influenced by effects that do not enter our analysis, such as the extra cost of wrinkling in a curved environment. 
\ER

\section{The lower bound}\label{sec:lb}

In the rest of the paper $\lesssim$ will stand for $\le C$, where $C > 0$ is a generic constant depending on $\alpha_s$, $r_0$, $R$. (The implicit constant is often dimensional.)

The radial symmetry of the domain $\Omega$ and of the substrate allows for convenient representation of the energy~\eqref{energy} in the radial coordinates $(r,\theta)$:
\begin{align}\label{e1.1}
 E_h(u,\xi) &= \int_0^{r_0} \biggl( \int_0^{2\pi} \biggl|\partial_r u_r + \frac{(\partial_r \xi)^2}{2} \biggl|^2 + \biggl|\frac{\partial_\theta u_\theta}{r} + \frac{u_r}{r} + \frac{(\partial_\theta \xi)^2}{2r^2} \biggl|^2 \nonumber
 \\
 &\hskip10em\relax + \frac{h^2}{r^4} |\partial_{\theta\theta} \xi|^2 + h^2 |\partial_{rr} \xi|^2 + \frac{\alpha_s}{h^2} \biggl|\xi + \frac{r^2}{2R}\biggl|^2 \dt \biggr) r \dr
 \\ \nonumber
 & \quad + \int_0^{r_0} \biggl( \int_0^{2\pi} 2\biggl|\frac{1}{2r} \partial_\theta u_r + \oh \partial_r u_\theta - \frac{1}{2r} u_\theta + \frac{1}{2r} \partial_r \xi \partial_\theta \xi\biggl|^2 + \frac{2h^2}{r^2} |\partial_{\theta r} \xi|^2 \dt\ \biggr) r \dr.
\end{align}

Motivated by the fact that in thin sheets (i.e. $h \ll 1$ and so $\alpha_s h^{-2} \gg 1$) the quantity $\xi + \frac{|x|^2}{2R}$ should be very small, instead of $\xi$ we consider $w$ defined by
\begin{equation}\nonumber
 w(r,\theta) = \xi(r,\theta) + \frac{r^2}{2R}.
\end{equation}
Since for any $f \in L^2(0,2\pi)$ we have
 \begin{equation}\label{e1.3}
 \fint_0^{2\pi} |f|^2 = |\bar f|^2 + \fint_0^{2\pi} |f - \bar f|^2,
\end{equation}
where $\bar f = \fint_0^{2\pi} f$, we see that for $\bar u_r(r) := \fint_0^{2\pi} u_r(r,\theta) \dt$ we have
\begin{equation}\label{e1.4}
\begin{aligned}
 \fint_0^{2\pi} \biggl|\frac{\partial_\theta u_\theta}{r} + \frac{u_r}{r} + \frac{(\partial_\theta w)^2}{2r^2}\biggl|^2 \dt
&=
 \left| \frac{\bar u_r}{r} + \fint_0^{2\pi} \frac{(\partial_\theta w)^2}{2r^2} \dt \right|^2
 \\
 &\quad + \fint_0^{2\pi} \biggl|\frac{\partial_\theta u_\theta}{r} + \frac{u_r}{r} + \frac{(\partial_\theta w)^2}{2r^2} - \frac{\bar u_r}{r} - \fint_0^{2\pi} \frac{(\partial_\theta w)^2}{2r^2} \dt \biggr|^2 \dt.
\end{aligned}
\end{equation}
Defining $\bw(r) := \fint_0^{2\pi} w(r,\theta) \dt$, we use definition of $w$ and~\eqref{e1.3} twice to write
\begin{equation}
\begin{aligned}\label{e1.5}
 \fint_0^{2\pi} \biggl|\partial_r u_r + \frac{(\partial_r \xi)^2}{2}\biggl|^2 \dt &= \biggl|\partial_r \bar u_r + \frac 12 \left(\frac{r}{R} - \partial_r \bw\right)^2 + \fint_0^{2\pi} |\partial_r (\bw - w)|^2 \dt \biggr|^2
\\
 &\quad + \fint_0^{2\pi} \biggl| \partial_r (u_r-\bar u_r) + \frac{(\partial_r \xi)^2}{2} - \fint_0^{2\pi} \frac{(\partial_r \xi)^2}{2}\dt\biggr|^2 \dt.
\end{aligned}
\end{equation}
Plugging~\eqref{e1.4} and~\eqref{e1.5} into~\eqref{e1.1} we get that
\begin{equation}\label{e2.12}
\begin{aligned}
 &E_h(u,\xi)
\\
 &= \int_0^{r_0} \biggl( \biggl|\partial_r \bar u_r + \frac 12 \left (\frac rR - \partial_r \bw\right)^2 + B(r)\biggl|^2 + W_r\left(\frac{\bar u_r}{r},w(r,\cdot)\right) + \frac{\alpha_s}{h^2} |\bar w|^2 + h^2 \biggl|\partial_{rr} \bw - \frac 1R\biggl|^2 \biggr) r \dr
\\ &\quad + R_h(u,\xi),
\end{aligned}
\end{equation}
where $B(r) := \fint_0^{2\pi} |\partial_r(\bar w - w)|^2 \dt$,
\begin{equation}\label{Wenergy}
 W_r(\eta,\xi) :=
  \biggl| \eta + \fint_0^{2\pi} \frac{1}{2r^2} (\partial_\theta \xi)^2 \dt\biggl|^2 + h^2 r^{-4} \fint_0^{2\pi} |\partial_{\theta\theta} \xi|^2 \dt + \alpha_s h^{-2} \fint_0^{2\pi} \biggl|\xi - \fint_0^{2\pi} \xi\biggl|^2 \dt,
\end{equation}
and
\begin{equation}\label{Rh}
\begin{aligned}
 R_h(u,\xi) &= \int_0^{r_0} \fint_0^{2\pi} \biggl| \partial_r (u_r-\bar u_r) + \frac{(\partial_r \xi)^2}{2} - \fint_0^{2\pi} \frac{(\partial_r \xi)^2}{2}\biggl|^2 \dt\ r \dr 
 \\ &\quad + \int_0^{r_0} \fint_0^{2\pi} \biggl|\frac{\partial_\theta u_\theta}{r} + \frac{u_r}{r} + \frac{(\partial_\theta w)^2}{2r^2} - \frac{\bar u_r}{r} - \fint_0^{2\pi} \frac{(\partial_\theta w)^2}{2r^2} \biggr|^2 \dt\ r \dr
\\
& \quad + \int_0^{r_0}\! \biggl( \fint_0^{2\pi}
\!\!\frac 12 \biggl|\frac{1}{r} \partial_\theta u_r\! +\! r \partial_r (\frac{u_\theta}{r})\! +\! \frac{1}{r} \partial_r \xi \partial_\theta \xi\biggl|^2\! + h^2 |\partial_{rr}(w - \bw)|^2\! +\! \frac{2h^2}{r^2} |\partial_{\theta r} \xi|^2 \dt \biggr) r \dr.
\end{aligned}
\end{equation}

To show that $\epsilon \ge c_0 h$, we need to gather some properties of the functionals $F_h$:

\begin{proposition}\label{prop}
Under the condition $\alpha_s < 2^{-8}(r_0/R)^4$, the functional $F_h$, defined in~\eqref{fh}, admits a unique minimizer $(\bv_h,\bw_h)$. Moreover, denoting $\sigma_h(r) := \bv_h' + \frac 12 (\frac rR - \bw_h')^2$, there exists $C=C(\alpha_s,r_0,R)$ such that 
\begin{gather}\label{fhfh}
  |F_0(\bv_0) - F_h(\bv_h,\bw_h)| \le C h^2,\\
\begin{gathered}\label{e2.15}
  \int_0^{r_0} \bw_h^2 r \dr \le C h^4, \quad \int_0^{r_0} \bw_h'^2 r \dr \le C h^2,
 \quad \int_0^{r_0} |\bv_h - \bv_0|^2 r \dr \le C h^2,\\
\sigma_h(r) = 4\alpha_s^{1/2}(r_0/r-1) \quad \emph{for } r \in ((2r_w+r_0)/3,r_0),
 \end{gathered}
\end{gather}
where $r_w = (16 \alpha_s^{\frac12} r_0 R^2)^{\frac13}$ and $\bv_0$ is defined in~\eqref{vone}.
 Finally, for any non-negative $B \in L^1(0,r_0)$ and any $(\bv,\bw)$ such that
 \begin{equation}\nonumber
  e\! :=\! \int_0^{r_0} \biggl[ \biggl(\bv' + \oh\left(\frac{r}{R} - \bw'\right)^2\! +\! B\! \biggl)^2\! + \alpha_s h^{-2} |\bw|^2\! + \!h^2 \biggl|\bw''\!-\!\frac1R\biggl|^2\! + W_{\emph{rel}}(\bv / r)\biggl] r \dr\! -\! F_h(\bv_h,\bw_h) \le 1,
 \end{equation}
  we have
 \begin{equation}\label{e2.1}
  \int_0^{r_0} \left(2\sigma_h(r) B(r) +  (\bv - \bv_h)_+^2\right)  r \dr \le C e.
 \end{equation} 
\end{proposition}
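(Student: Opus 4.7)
I would prove Proposition~\ref{prop} in three stages, roughly matching the three groups of claims. First I would establish existence of a minimizer $(\bv_h, \bw_h)$ by the direct method: the terms $\alpha_s h^{-2}\int \bw^2 r\dr$ and $h^2 \int(\bw''-1/R)^2 r\dr$ together with the (at least) linear growth of $\W(\eta)$ as $\eta \to -\infty$ give coercivity in a weighted Sobolev space, and each integrand is convex in the highest appearing derivative of each variable, so lower semicontinuity follows by standard arguments. For uniqueness, though $F_h$ is not jointly convex in $(\bv,\bw)$, I would show a posteriori from the Euler-Lagrange analysis that any minimizer has $\sigma_h \ge 0$ on $(0,r_0)$, and $F_h$ restricted to the ``positive-stress sector'' is strictly convex. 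The Euler-Lagrange system is
\[
(2r\sigma_h)' = \W'(\bv_h/r), \qquad [2rh^2(\bw_h''-1/R)]'' + [2r\sigma_h(r/R-\bw_h')]' + 2r\alpha_s h^{-2}\bw_h = 0,
\]
together with the natural boundary conditions at $r=r_0$: $\sigma_h(r_0)=0$, $(\bw_h''-1/R)(r_0)=0$, and $\bw_h'''(r_0)=0$.

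The upper half $F_h(\bv_h,\bw_h) \le F_0(\bv_0)+Ch^2$ of the energy comparison follows immediately from the admissible competitor $(\bv_0, 0)$: $F_h(\bv_h,\bw_h) \le F_h(\bv_0, 0) = F_0(\bv_0) + h^2 r_0^2/(2R^2)$. For the matching lower half I expand
\[
F_h(\bv_h,\bw_h) - F_0(\bv_h) = \int_0^{r_0}\bigl[2\tau_h b + b^2 + \alpha_s h^{-2}\bw_h^2 + h^2(\bw_h''-1/R)^2\bigr]\,r\dr,
\]
with $\tau_h := \bv_h' + r^2/(2R^2)$ and $b := -(r/R)\bw_h' + \oh(\bw_h')^2$. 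Testing the Euler-Lagrange equation for $\bw_h$ against $\bw_h$ produces an integral identity for the quadratic terms in $\bw_h$; combined with integration by parts on the cross term $\int 2\tau_h b\,r\dr$ and with $L^\infty$-estimates on $\bw_h$ (obtained from the $L^2$-bounds via Sobolev embedding, exploiting the thin boundary-layer structure forced by the competition $\alpha_s h^{-2} \bw^2$ vs.\ $h^2 (\bw''-1/R)^2$), this gives the bound $\lesssim h^2$; together with $F_0(\bv_0)\le F_0(\bv_h)$ this closes the estimate~\eqref{fhfh}. The bounds in~\eqref{e2.15} on $\bw_h$ and $\bw_h'$ are then read off directly from the now-controlled energy, while the bound on $\bv_h - \bv_0$ follows from strict convexity of $F_0$ at its minimizer. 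For the explicit formula for $\sigma_h$ on $((2r_w+r_0)/3,r_0)$: by~\eqref{vone} together with the $L^2$-closeness of $\bv_h$ to $\bv_0$ (upgraded to pointwise control via regularity) we have $\bv_h(r)/r < -2\alpha_s^{1/2}$ on this interval, so $\W'(\bv_h/r) = -4\alpha_s^{1/2}$ is constant, and integrating the $\bv$-equation from $r$ to $r_0$ with the natural BC $\sigma_h(r_0)=0$ yields the claimed formula.

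The coercivity inequality~\eqref{e2.1} is the main obstacle. Setting $\phi := \bv - \bv_h$, $\psi := \bw - \bw_h$, and introducing the linearized ``stress perturbation'' $\delta_{\mathrm{lin}} := \phi' - (r/R-\bw_h')\psi' + B$, I expand the energy on the left-hand side of the defect inequality around $(\bv_h,\bw_h)$. After integration by parts the linear terms in $(\phi,\psi)$ cancel thanks to the Euler-Lagrange equations, leaving
\[
e \ge \int_0^{r_0} 2\sigma_h B\,r\dr + \int \delta_{\mathrm{lin}}^2\,r\dr + \alpha_s h^{-2}\!\int \psi^2\,r\dr + h^2\!\int (\psi'')^2\,r\dr + \int \sigma_h(\psi')^2\,r\dr + D_\W,
\]
where $D_\W \ge 0$ is the convexity defect of $\W$. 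Since $\sigma_h \ge 0$ and $B\ge 0$, the bound $\int 2\sigma_h B\,r\dr \le e$ is already immediate. To bound $\int\phi_+^2 r\dr$ one performs a case analysis on whether $\bv_h/r$ lies in the unwrinkled or wrinkled regime of $\W$: where $\bv_h/r \ge -2\alpha_s^{1/2}$, $D_\W$ directly yields $\int \phi_+^2/r\dr \lesssim e$ (and hence $\int\phi_+^2 r\dr \lesssim r_0^2 e$); where $\bv_h/r < -2\alpha_s^{1/2}$ and $\bv$ also lies in the wrinkled regime, $\W$ is linear so $D_\W$ provides no control, and one must instead use the $\delta_{\mathrm{lin}}^2$-term to bound $\phi'$ modulo $B$ and $\psi'$, together with the interpolation $\int(\psi')^2 r\dr \lesssim (\int\psi^2 r\dr)^{1/2}(\int(\psi'')^2 r\dr)^{1/2}$ (whose right-hand side is $\lesssim e$ because the $h^{-2}$ substrate factor and the $h^2$ bending factor cancel) and a weighted Hardy-type inequality exploiting $\phi(0)=0$. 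The delicate part here is that all controls on $(\phi',\psi',B)$ come with different $h$-weightings; balancing them so that the final constant in $\int\phi_+^2 r\dr \lesssim e$ is uniform in $h$ is the principal technical challenge of the proof.
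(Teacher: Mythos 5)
Your proposal has the right broad architecture (direct method, Euler--Lagrange identities, second-order expansion, case analysis on the regime of $\W$), and the coercivity argument for~\eqref{e2.1} is close to the paper's. But the route you take for the energy comparison~\eqref{fhfh} has a genuine gap, and it is worth seeing why the paper does it differently.

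You propose to bound $F_h(\bv_h,\bw_h)-F_0(\bv_h) = \int[2\tau_h b + b^2 + \alpha_s h^{-2}\bw_h^2 + h^2(\bw_h''-1/R)^2]\,r\dr$ from below using the $\bw$-Euler--Lagrange identity tested against $\bw_h$. That identity gives $\int[\sigma_h(\bw_h')^2 + \alpha_s h^{-2}\bw_h^2 + h^2(\bw_h'')^2]\,r\dr = \int[\sigma_h \tfrac{r}{R}\bw_h' + \tfrac{h^2}{R}\bw_h'']\,r\dr$. The right side is linear in $\bw_h'$, not in $\bw_h$, and if you try to close by interpolation $\|\bw_h'\|^2\lesssim\|\bw_h\|\,\|\bw_h''\|$ you only reach $\int\bw_h^2 r\dr\lesssim h^2$, not the needed $h^4$. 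To get the right order you must integrate by parts and show that the coefficient $(\sigma_h r^2/R)'$ is bounded; but $\sigma_h' = r^{-1}(\tfrac12\W'(\bv_h/r)-\sigma_h)$, so this needs care near $r=0$, and there are boundary terms at $r_0$ involving $\bw_h'(r_0)$. Your phrase ``$L^\infty$-estimates on $\bw_h$ (obtained from the $L^2$-bounds via Sobolev embedding)'' is circular, since the $L^2$-bounds are exactly what you are trying to establish. The paper avoids all of this by using the algebraic identity~\eqref{fident} for $f(x,y)=(x+\tfrac12 y^2)^2$ to write \emph{two} second-order Taylor expansions of $F_h$ --- one around $(\bv_h,\bw_h)$, one around $(\bv_0,0)$ --- and adds them. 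Because $DF_h(\bv_h,\bw_h)=0$ and $D_vF_h(\bv_0,0)=0$, the only surviving first-order term is $D_wF_h(\bv_0,0)\bw_h$, which (thanks to the explicit formula~\eqref{vone} for $\bv_0$) has the form $\int g(r)\bw_h r\dr$ with $g$ bounded. This gives~\eqref{e2.36}, whose left side is a sum of nonnegative terms including $2\alpha_s h^{-2}\int\bw_h^2 r\dr$, and H\"older then yields $\int\bw_h^2 r\dr\lesssim h^4$ in one step. Both~\eqref{fhfh} and the first part of~\eqref{e2.15} then fall out of~\eqref{e2.36} without the technical detours your route requires.

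Two smaller points. (i) Your appeal to ``strict convexity of $F_0$ at its minimizer'' to get $\int|\bv_h-\bv_0|^2 r\dr\lesssim h^2$ is imprecise: $F_0$ is \emph{not} strictly convex, since $\W$ is affine on the wrinkled regime $\eta\le-2\alpha_s^{1/2}$. The paper instead first obtains $\int|\bv_h'-\bv_0'|^2 r\dr\lesssim h^2$ from the bound on $\sigma_h-\sigma_0$ (which requires the $\bw_h'$ control already in hand), then uses uniform convexity of $\W$ only on the unwrinkled interval $(0,r_w)$ to pin down the constant, and upgrades via Sobolev embedding. (ii) For existence/uniqueness the paper's main device is to replace the membrane integrand by its positive part $(\bv'+\tfrac12(r/R-\bw')^2)_+^2$; this makes the problem convex outright, and a posteriori the minimizer has $\sigma_h\ge 0$, so it also minimizes $F_h$. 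Your ``positive-stress sector'' idea is in the same spirit but would still need a separate convexity argument for existence, whereas the paper's trick handles both at once.
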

The proof of Proposition~\ref{prop} will be given in Section~\ref{sec:prop}. 
Since $R_h \ge 0$, relation~\eqref{e2.12} and the fact $W_r(\eta,\xi) \ge \W(\eta)$, together with~\eqref{e2.1} applied with $\bv := \bar u_r$ imply a lower bound 
\begin{multline*}
 \inf_{u,\xi}
 \int_{0}^{r_0} \biggl( 2\sigma_h(r) B(r) + W_r\left(\frac{\bar u_r}{r},w(r,\cdot)\right) - \W\left(\frac{\bar u_r}{r}\right) + \left(\bar u_r - \bv_h\right)_+^2 \biggr) r \dr 
 \\
 \lesssim \inf_{u,\xi} E_h(u,\xi) - F_h(\bv_h,\bw_h),
\end{multline*}
which using~\eqref{fhfh} can be postprocessed into
\begin{equation}\label{e2.18}
  \inf_{u,\xi}
 \int_{0}^{r_0} \biggl( 2\sigma_h(r) B(r) + W_r\left(\frac{\bar u_r}{r},w(r,\cdot)\right) - \W\left(\frac{\bar u_r}{r}\right) + \left(\bar u_r - \bv_h\right)_+^2 \biggr) r \dr \le C\left( \epsilon + h^2\right),
\end{equation}
where we recall that $\epsilon = \inf_{u,\xi} E_h(u,\xi) - F_0(\bv_0)$ and $B(r) = \fint_0^{2\pi} |\partial_r(\bar w - w)|^2 \dt$. To obtain the~desired lower bound for the left-hand side in~\eqref{e2.18}, for $r \in (0,r_0)$ let $\{ a_k(r) \}_{k \in \mathbb{Z}}$ be the Fourier coefficients of $w(r,\cdot)$. To simplify notation, we will drop the $r$-dependence in $a_k(r)$. Then
\begin{align*}
 B(r) &= \sum_{k \neq 0} \left(\partial_r a_k\right)^2, \\
 W_r(\eta,w(r,\cdot)) &= \biggl|\eta + \frac{1}{2r^2} \sum_{k \in \mathbb{Z}} a_k^2 k^2\biggr|^2 + \sum_{k \in \mathbb{Z} \setminus \{0\}}\! a_k^2 k^2 r^{-2} \biggl( \left(\frac{kh}{r}\right)^2 + \alpha_s \left(\frac{kh}{r}\right)^{-2} \biggr).
\end{align*}

The lower bound $\epsilon \ge c_0h$ will be a consequence of the following lemma:

{
\renewcommand{\r}{\varrho}
\begin{lemma}\label{lm1}
 Let $\delta > 0$, and let $\r_0, \r_1 \in (0,r_0)$ be such that 
$\r_0 < \r_1 < \sqrt{2} \r_0$ and  
 \begin{equation}\label{e5.2}
  \frac{\bar u_r(\r_i)}{\r_i} \le -2\alpha_s^{\oh} - \delta, \quad i=0,1.
 \end{equation}
 Then we have a lower bound of the form
 \begin{multline} \label{lemma1-conclusion}
 \sum_{i=0}^1 \biggl(W_r\left(\frac{\bar u_r(\r_i)}{\r_i},w\right) - W_\emph{rel}\left(\frac{\bar u_r(\r_i)}{\r_i}\right)\biggr) + \fint_{\r_0}^{\r_1} B(r) \dr 
  \\
  \ge
  \min\biggl( \frac{\delta^2}{4}, \frac{\delta}{8} \left(\frac{l}{\r_0}\right)^2 \alpha_s^\oh, \frac{\delta}{2} \biggl( \frac{32}{\alpha_s^{1/2}} \left(\frac{\r_0}{l} \right)^2 + 8{\alpha_s^{1/2}} \left( \frac{l}{h} \right)^2 \biggr)^{-1}\biggl),
 \end{multline}
 where $l := \r_1 - \r_0$.
\end{lemma}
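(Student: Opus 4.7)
The plan is to expand $w(r,\theta)$ in a Fourier series $w = \sum_{k\in\mathbb{Z}} a_k(r) e^{ik\theta}$ and reduce the lemma to inequalities on the one-dimensional sequences $a_k(\cdot)$. First I would verify (via Plancherel together with the identity $u^2 + \alpha_s u^{-2} = (u - \alpha_s^{1/2}/u)^2 + 2\alpha_s^{1/2}$, applied at $u = kh/r$) that for $\eta \le -2\alpha_s^{1/2}$,
\begin{equation}\nonumber
W_r(\eta, w(r,\cdot)) - \W(\eta) = \Bigl(\tfrac{1}{2}S(r) + \eta + 2\alpha_s^{1/2}\Bigr)^2 + E_{\mathrm{ex}}(r),
\end{equation}
where $S(r) := \sum_{k\neq 0}(k/r)^2 a_k^2$ and $E_{\mathrm{ex}}(r) := \sum_{k\neq 0}(k/r)^2 a_k^2 f(kh/r)$ with $f(u) := (u-\alpha_s^{1/2}/u)^2\ge 0$. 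Setting $t_i^* := -\bar u_r(\varrho_i)/\varrho_i - 2\alpha_s^{1/2}$, hypothesis~\eqref{e5.2} gives $t_i^* \ge \delta$, so the left-hand side of~\eqref{lemma1-conclusion} dominates $\sum_{i=0,1}\bigl[(\tfrac{1}{2}S(\varrho_i) - t_i^*)^2 + E_{\mathrm{ex}}(\varrho_i)\bigr] + \fint_{\varrho_0}^{\varrho_1} B(r)\dr$. The first term of the $\min$ follows at once: if $S(\varrho_i) \le \delta$ for some $i$, then $|S(\varrho_i)/2 - t_i^*| \ge t_i^* - \delta/2 \ge \delta/2$, producing the bound $\delta^2/4$.

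From now on I may assume $S(\varrho_0), S(\varrho_1) > \delta$. I would set $\tau := \alpha_s^{1/2} l^2/(4\varrho_0^2)$ and define the ``$\tau$-good'' sets $G_i := \{k : f(kh/\varrho_i) \le \tau\}$. A direct study of $f$ shows $k \in G_i$ iff $|k| \in [u_-\varrho_i/h, u_+\varrho_i/h]$, with $u_\pm := (\pm\tau^{1/2} + \sqrt{\tau + 4\alpha_s^{1/2}})/2$; since $l < \varrho_0$, the choice of $\tau$ yields $u_+\varrho_0 < u_-\varrho_1$, so $G_0 \cap G_1 = \emptyset$. By definition of $G_i$ one has $E_{\mathrm{ex}}(\varrho_i) \ge \tau \sum_{k\notin G_i}(k/\varrho_i)^2 a_k^2$. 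Consequently, if $E_{\mathrm{ex}}(\varrho_i) \ge \tau\delta/2$ for some $i$, the left-hand side already dominates $\tau\delta/2 = \delta\alpha_s^{1/2}(l/\varrho_0)^2/8$, giving the second term of the $\min$.

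The remaining case is $E_{\mathrm{ex}}(\varrho_i) < \tau\delta/2$ for both $i$. Then $\sum_{k\in G_1}(k/\varrho_1)^2 a_k^2(\varrho_1) > \delta/2$, while $G_1 \subseteq G_0^c$ forces $\sum_{k\in G_1}(k/\varrho_0)^2 a_k^2(\varrho_0) \le E_{\mathrm{ex}}(\varrho_0)/\tau$. Introducing $\Phi(r) := \sum_{k\in G_1} k^2 a_k(r)^2$, these read $\Phi(\varrho_1) \ge \varrho_1^2\delta/2$ and $\Phi(\varrho_0) \le \varrho_0^2 E_{\mathrm{ex}}(\varrho_0)/\tau$. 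Using $k^2 \le u_+^2\varrho_1^2/h^2 \le 4\alpha_s^{1/2}\varrho_1^2/h^2$ on $G_1$, Cauchy--Schwarz applied to $\Phi'$ gives $|(\sqrt{\Phi})'(r)| \le (u_+\varrho_1/h)\sqrt{B(r)}$; integrating from $\varrho_0$ to $\varrho_1$ and applying Cauchy--Schwarz in $r$ yields $\sqrt{\Phi(\varrho_1)} \le \sqrt{\Phi(\varrho_0)} + (u_+\varrho_1/h)\, l\, \sqrt{\fint B}$. Squaring via $(a+b)^2 \le 2a^2 + 2b^2$, dividing by $\varrho_1^2$, and inserting the bounds on $\Phi(\varrho_i)$ and the value of $\tau$ produces
\begin{equation}\nonumber
\frac{\delta}{2} \le \frac{8\varrho_0^2}{\alpha_s^{1/2}\, l^2}\, E_{\mathrm{ex}}(\varrho_0) + 8\alpha_s^{1/2}\Bigl(\frac{l}{h}\Bigr)^2 \fint B,
\end{equation}
from which the third term of the $\min$ follows: both $E_{\mathrm{ex}}(\varrho_0)$ and $\fint B$ lie below the left-hand side of~\eqref{lemma1-conclusion}, and the resulting denominator $8(\varrho_0/l)^2/\alpha_s^{1/2} + 8\alpha_s^{1/2}(l/h)^2$ is no larger than the one appearing in the lemma.

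The main technical obstacle is this last subcase. One must choose $\tau$ simultaneously small enough to force $G_0 \cap G_1 = \emptyset$ (so that significant Fourier mass in $G_1$ at $\varrho_1$ cannot come from mass already in $G_1$ at $\varrho_0$) and large enough that $\tau\delta/2$ matches the targeted second bound. The Cauchy--Schwarz bound on $(\sqrt{\Phi})'$ is what quantitatively couples the radial cost $\fint B$ of migrating spectral weight with the excess cost $E_{\mathrm{ex}}(\varrho_0)$ of pre-positioning weight outside $G_0$; this coupling is precisely what generates the two-term denominator in the third bound of the $\min$.
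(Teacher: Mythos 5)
Your proof is correct and follows the same overall strategy as the paper's: expand $w$ in Fourier modes, use the identity from~\eqref{e2.28} to split $W_r-\W$ into the ``wasted-arclength'' square and the ``wrong-frequency'' excess $E_{\mathrm{ex}}$, handle the small-$\sigma$ case to get $\delta^2/4$, and in the remaining case couple the radial migration cost $\fint B$ with the cost of sitting at the wrong wavenumber. The implementation differs in two ways worth noting. First, where the paper splits the spectrum at $|k-k_0|\lessgtr K$ and ``tracks forward'' (mass near $k_0$ at $\varrho_0$ must lie far from $k_1$ at $\varrho_1$, hence contributes to $E_{\mathrm{ex}}(\varrho_1)$), you instead introduce two threshold sets $G_0,G_1$ and ``track backward'' (mass in $G_1$ at $\varrho_1$ must have sat outside $G_0$ at $\varrho_0$, hence contributes to $E_{\mathrm{ex}}(\varrho_0)$). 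Second, where the paper applies the pointwise inequality $f^2(0)\le 2f^2(l)+2l\int f'^2$ mode by mode, you apply Cauchy--Schwarz directly to $(\sqrt\Phi)'$, which is a little cleaner and incidentally improves the constant in the third term of the $\min$ (your $8(\varrho_0/l)^2/\alpha_s^{1/2}$ against the paper's $32(\varrho_0/l)^2/\alpha_s^{1/2}$), so your stated bound is strictly stronger than~\eqref{lemma1-conclusion}. The disjointness check $G_0\cap G_1=\emptyset$ is the crux of your variant and you have the right calculation ($u_+\varrho_0 < u_-\varrho_1$ reduces to $\tau^{1/2}<\alpha_s^{1/4}$, which holds since $l<\varrho_0$); the paper instead gets the separation ``for free'' from $K=(k_1-k_0)/2$. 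Both routes are valid; yours trades the paper's simple half-distance choice for an explicit threshold set, and buys a slightly better constant and, arguably, a cleaner explanation of why the two-term denominator arises.
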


Condition~\eqref{e5.2} expresses the fact that the sheet is expected to wrinkle at circles $r=\r_i, i=0,1$. 

\begin{proof}[Proof of Lemma~\ref{lm1}.]
 Recall that $l = \r_1 - \r_0$ and let $k_i := \alpha_s^{1/4} \r_i / h$. As explained in Section~\ref{sec:model}, $k_i$ is the optimal wrinkling wavenumber at position $\r_i$, i.e. the one which minimizes \BR6 the \ER energetic cost of wrinkling.
 By reorganizing the definition of $W_r$ we get 
 \begin{equation}\nonumber
  W_r(\eta,w(r,\cdot)) = \left|\eta + \oh \sigma\right|^2 + 2\alpha_s^{\oh} \sigma + \sum_{k \neq 0} a_k^2 \left(\frac{k}{r}\right)^2 \left(\frac{h|k|}{r} - \alpha_s^{\oh} \frac{r}{|k|h}\right)^2,
 \end{equation}
 where
$\sigma(r) := \fint_0^{2\pi} \frac{1}{r^2} |\partial_\theta \xi|^2 \, d\theta =
 \sum_{k \neq 0} a_k^2 (\frac{k}{r})^2 \ge 0$ represents the arclength wasted by wrinkling.
Since in all the quantities only the modulus of $k$ appears, to simplify the notation we assume that $a_k = 0$ for $k < 0$. From the definition of $\W$ we get for $\eta < -2\alpha_s^{1/2}$ the following identity:
 \begin{equation}\label{e2.28}
  W_r(\eta,w(r,\cdot)) - \W(\eta) = \frac14 \left|\sigma + 2\left(\eta + 2\alpha_s^\oh\right)\right|^2 + \sum_{k > 0} a_k^2 \left(\frac{k}{r}\right)^2 \left(\frac{hk}{r} - \alpha_s^{\oh} \frac{r}{kh}\right)^2.
 \end{equation}
If either $\sigma(\r_0) \le \delta$ or $\sigma(\r_1) \le \delta$, then the 
conclusion of the lemma follows easily. Indeed, in this case we can 
write~\eqref{e5.2} as $\eta + 2\alpha_s^{1/2} \le -\delta$ with
$\eta = \bar u_r(\rho_i)/\rho_i$ for $i=0$ or $i=1$; it follows that 
$\frac14 |\sigma + 2(\eta + 2\alpha_s^\oh)|^2 \ge \frac 14 \delta^2$, which implies the
conclusion of the lemma. So we may assume for the~rest of the proof that 
$\sigma(\r_0) \ge \delta$ and $\sigma(\r_1) \ge \delta$.

 Let $K := \frac{k_1 - k_0}{2} = \frac l2 \frac{\alpha_s^{1/4}}{h}$. We consider two cases. If the energy at $\r_0$ is not concentrated near the optimal wavenumber $k_0$, meaning
 \begin{equation}\label{e2.29}
  \sum_{|k - k_0| > K} a_k^2 \left(\frac{k}{\r_0}\right)^2 \ge \frac 12 \sigma(\r_0),
 \end{equation}
 then by~\eqref{e2.28}
 \begin{align*}
  W_{\r_0}\left(\frac{\bar u_r(\r_0)}{\r_0},w(\r_0,\cdot)\right) - \W \left(\frac{\bar u_r(\r_0)}{\r_0}\right)
  &\uoset{\eqref{e2.29}}{}{\ge} \sum_{|k-k_0| > K} a_k^2 \left(\frac{k}{\r_0}\right)^2 \left(\frac{hk}{\r_0} - \alpha_s^{\oh} \frac{\r_0}{kh}\right)^2
  \\
  &\uoset{\eqref{e2.29}}{}{\ge} \sum_{|k-k_0| > K} a_k^2 \left(\frac{k}{\r_0}\right)^2 \left(\frac{h}{\r_0} |k-k_0|\right)^2
  \\
  &\overset{\eqref{e2.29}}{\ge} \frac{\sigma(\r_0)}{8} \left(\frac{l}{\r_0}\right)^2 \alpha_s^\oh.
 \end{align*}
 In the other case we have
 \begin{align*}
 \frac{\sigma(\r_0)}{2} &\le \sum_{|k - k_0| \le K} a_k^2(\r_0) \left(\frac{k}{\r_0}\right)^2 
 \\
 &\le 2 \sum_{|k - k_0| \le K} a_k^2(\r_1) \left(\frac{k}{\r_0}\right)^2 + 2l\int_{\r_0}^{\r_1} \sum_{|k - k_0| \le K} (\partial_r a_k(r))^2 \left(\frac{k}{\r_0}\right)^2 \dr,
 \end{align*}
 where we used an elementary inequality $f^2(l) \le 2f^2(0) + 2l \int_0^l f'^2$. To estimate the first term on the right-hand side, we observe that $(k/\r_0)^2 \le 2(k/\r_1)^2$ and that for $
 |k-k_0|<K$ we have $|hk/\r_1 - \alpha_s^\oh \r_1/(hk)| \ge \frac{h}{\r_1} |k - k_1| \ge \alpha_s^{1/4} \frac{l}{2\r_1}$.
 Hence
 \begin{equation}\nonumber
  2 \sum_{|k - k_0| \le K} a_k^2(\r_1) \left(\frac{k}{\r_0}\right)^2 \le \frac{16}{\alpha_s^{1/2} } \left(\frac{\r_1}{l}\right)^2 \sum_{|k - k_0| \le K} a_k^2(\r_1) \left(\frac{k}{\r_1}\right)^2 \left(\frac{hk}{\r_1} - \alpha_s^\oh \frac{\r_1}{hk}\right)^2.
 \end{equation}
 For the second term we observe that for $|k - k_0| \le K$ we have $|k/\r_0|^2 \le 4\alpha_s^\oh/h^2$, and so
 \begin{equation}\nonumber
  2l\int_{\r_0}^{\r_1} \sum_{|k - k_0| \le K} (\partial_r a_k(r))^2 \left(\frac{k}{\r_0}\right)^2 \dr \le 8\alpha_s^\oh \frac{l^2}{h^2} \fint_{\r_0}^{\r_1} B(r) \dr.
 \end{equation}
 Summing these two and using that $\r_1^2 \le 2\r_0$ yields
 \begin{equation}\nonumber \frac{\delta}{2} \le \frac{\sigma(\r_0)}{2} \le \biggl( \frac{32}{\alpha_s^{1/2}} \left(\frac{\r_0}{l} \right)^2 + 8{\alpha_s^{1/2}} \left( \frac{l}{h} \right)^2 \biggr) \cdot \biggr(\mbox{LHS of }~(\ref{lemma1-conclusion}) \biggr),
 \end{equation}
 which concludes the proof of the lemma.
\end{proof}

\begin{proof}[Proof of the lower-bound half of Theorem~\ref{thm}, i.e. the estimate $\epsilon \ge c_0 h$.]
Let $I$ be any interval of length $\alpha_s^{-1/4} r_w^{1/2} h^{1/2}$ such that $I \subset [(2r_w+r_0)/3,(r_w+2r_0)/3]$
(such intervals exist, provided that the constant $\h$ in the 
statement of Theorem~\ref{thm} is chosen sufficiently small). 
The condition $I \subset [(2r_w+r_0)/3,(r_w+2r_0)/3]$ assures that $I$ stays away from 
the inner and outer boundaries of the wrinkled region (which are at $r_w$ and $r_0$
respectively). One verifies using~\eqref{vone} the existence of a constant $C_2>0$ such that
$\frac{\bv_0(r)}{r} + 2\alpha_s^{1/2} \le -2C_2$ for all $r \in I$ (and all such intervals
$I$). It follows that 
$\frac{\bar u_r(r)}{r} + 2\alpha_s^{1/2} \le -2C_2 + \frac{(\bar u_r(r) - \bv_0(r))_+}{r}$. 
Hence at each $r \in I$ we have either $\frac{\bar u_r(r)}{r} + 2\alpha_s^{1/2} \le -C_2$ 
or $\frac{(\bar u_r(r) - \bv_0(r))_+}{r} \ge C_2$. 
Writing $I = (a,b)$, let us define $M_0 \subset (a,(2a+b)/3)$, $M_1 \subset ((a+2b)/3,b)$ to be maximal sets such that for $r \in M_0 \cup M_1$
\begin{equation}\label{e4.1}
 \bar u_r(r) \le -2\alpha_s^{1/2}r - C_2r.
\end{equation}
We distinguish two cases. If $|M_0| < |I|/6$ or $|M_1| < |I|/6$, the complement $I \setminus (M_0 \cup M_1)$ has measure at least $|I|/6$ and inside this complement~\eqref{e4.1} is false, and so by above considerations $\frac{(\bar u_r(r) - \bv_0(r))_+}{r} \ge C_2$. Using the triangle inequality, this gives $(\bar u_r - \bv_h)_+ + |\bv_h - \bv_0| \ge (\bar u_r - \bv_h)_+ + (\bv_h - \bv_0)_+ \ge (\bar u_r-\bv_0)_+ \ge C_2r$, which then implies $(\bar u_r - \bv_h)_+^2 + |\bv_h - \bv_0|^2 \gtrsim 1$, which in turn after integration yields $\int_I ( (\bar u_r - \bv_h)_+^2 + |\bv_h - \bv_0|^2 ) \gtrsim |I|$.

In the other case, i.e. if both $|M_0| \ge |I|/6$ and $|M_1| \ge |I|/6$, for $i=0,1$ we can choose $\rho_i = \arg\min_{\rho \in M_i} W_r(\frac{\bar u_r(\rho)}{\rho},w(\rho_i,\cdot)) - \W(\frac{\bar u_r(\rho)}{\rho})$, and apply Lemma~\ref{lm1} to get
\begin{align*}
 \int_I W_r\left(\frac{\bar u_r(r)}{r},w(r,\cdot)\right) &- \W\left(\frac{\bar u_r(r)}{r}\right) + B(r) \dr \gtrsim
 \\
 &|I| \min\biggl( 1, \left(\frac{l}{\r_0}\right)^2 \alpha_s^\oh, \biggl( \frac{32}{\alpha_s^{1/2}} \left(\frac{\r_0}{l} \right)^2 + 8{\alpha_s^{1/2}} \left( \frac{l}{h} \right)^2 \biggr)^{-1}\biggl),
\end{align*}
where the fact that $l = \rho_1 - \rho_0 \sim \alpha_s^{-1/4}r_w^{1/2} h^{1/2}$ implies that the right-hand side in the previous estimate is at least of order $h$. Combining the two cases we get
\begin{equation}\nonumber
 \int_I (\bar u_r - \bv_h)_+^2 + |\bv_h - \bv_0|^2 + W_r\left(\frac{\bar u(r)}{r},w(r,\cdot)\right) - \W\left(\frac{\bar u(r)}{r}\right) + B(r) \dr \gtrsim |I| h.
\end{equation}
Finally, we use estimate~\eqref{e2.18} together with the fact that $\sigma_h \gtrsim 1$ and $r \sim 1$ inside $[2r_w + r_0)/3, (r_w + 2r_0)/3]$, and the estimate on the difference $\bv_h - \bv_0$, and cover at least half of $[2r_w + r_0)/3, (r_w + 2r_0)/3]$ with disjoint intervals like the interval $I$ considered above to get the~desired lower bound
 $\epsilon \gtrsim h$.
\end{proof}

\section{Proof of Proposition~\ref{prop}}\label{sec:prop}

\begin{proof}[Proof of Proposition~\ref{prop}.]
We begin by showing that $F_h$ achieves its minimum. Indeed, replacing the membrane
term in \eqref{fh} by its positive part -- that is, replacing the first term under the~integral by $\left( \bv' + \oh(\frac{r}{R}-\bw')^2 \right)_+^2 $ -- makes the~problem convex. This modified problem achieves its minimum by the direct method of the
calculus of variations. We claim that its minimizer $(\bv_h,\bw_h)$ also minimizes 
the original functional $F_h$. This follows from the fact that for 
$(\bv_h,\bw_h)$ the expression $\bv_h' + \oh(\frac{r}{R} - \bw_h')^2$ must be 
non-negative almost everywhere (since otherwise one can easily modify 
$(\bv_h,\bw_h)$ to obtain a competitor with strictly smaller energy). 
Heuristically this is clear, since the relation $\bv_h' + \oh(\frac{r}{R} - \bw_h')^2 \ge 0$
says simply that the sheet is in tension in the radial direction. This argument also
shows that the minimizer $(\bv_h,\bw_h)$ is unique, since the modified functional 
is everywhere convex, and strictly convex whenever $\bv' + \oh(\frac{r}{R}-\bw')^2 > 0$.  

At risk of redundancy we sketch another proof that the minimizer exists, by applying the~direct method of the calculus of variations directly to $F_h$. Indeed, since 
$h$ is strictly positive the~functional $F_h$ controls the second derivative of $\bar w$.
Therefore for a minimizing sequence $\bar w'$ would converge strongly, and this can be 
used to pass to the limit in the first term. 

To obtain information about the form of $(\bv_h,\bw_h)$, we first set $w = 0$ and minimize $F_h(v,0)$, which is the same as minimizing $F_0$. This can be done by guessing that $(0,r_0)$ splits into two non-trivial intervals: $(0,r_w)$, where $\bv(r)/r \ge -2\alpha_s^{1/2}$, meaning in this region the sheet will not wrinkle (hence we use that $\W(\eta) = \eta^2$ here), and $(r_w,r_0)$, where the sheet wrinkles. Solving the~resulting ordinary differential equation leads to the explicit form~\eqref{vone} for the minimizer $\bv_0$ of $F_0$. 

Using the optimality properties for $(\bv_h,\bw_h)$ and $\bv_0$ we get that $DF_h(\bv_h,\bw_h) = 0$ and $D_vF_h(\bv_0,0)=0$. In addition, for any smooth test function $\varphi$ we have for some function $g$: 
\begin{equation*}
D_wF_h(\bv_0,0)\varphi = \int_0^{r_0} -2\left(\bv_0 + \frac{1}{2} \frac{r^2}{R^2} \right) \frac{r}{R} \varphi' r \dr = \int_0^{r_0} g(r) \varphi(r) r \dr.
\end{equation*}
We can use~\eqref{vone} to get explicit formula for $g$, and in particular to show that $g$ is bounded by some constant $C_g$. Here by $DF_h = 0$ and $D_v F_h = 0$ we mean that the Gateaux derivative (i.e. directional derivative) of the functional $F_h$ vanishes for every direction $(v,w)$ and $(v,0)$, respectively. 

Recall that $\sigma_h(r) = \bv_h'(r) + \frac{1}{2}(\frac{r}{R} - \bw_h(r)')^2$ (see~\eqref{vone}) and define $\sigma_0(r) := \bv_0'(r) + \frac{1}{2} \frac{r^2}{R^2}$. Then using a simple identity for $f(x,y) = (x + \frac 12 y^2)^2$ of the form
\begin{equation}\label{fident}
 f(x,y) - f(a,b) - Df(a,b)\cdot (x-a,y-b) = \left( \left(x+\tfrac12 y^2\right) - \left(a+\tfrac 12 b^2\right) \right)^2 + \left(a+\tfrac12 b^2\right) (y-b)^2,
\end{equation}
we get using Taylor's theorem with the remainder in the integral form (here we use that $\W'$ is absolutely continuous)
\begin{align}\label{e2.37}
 F_h(\bv_0,&0) - F_h(\bv_h,\bw_h) - DF_h(\bv_h,\bw_h)\cdot(\bv_0-\bv_h,-\bw_h)
\\ \nonumber
&= \int_0^{r_0} \biggl[ (\sigma_0 - \sigma_h)^2 + \sigma_h (\bw_h')^2 + \alpha_s h^{-2} \bw_h^2 + h^2 \bw_h''^2 + \int_{\bv_h/r}^{\bv_0/r} \W''(\xi) (\bv_0/r - \xi) \ud \xi \biggr] r \dr,
\\ \nonumber
F_h(\bv_h,&\bw_h) - F_h(\bv_0,0) - DF_h(\bv_0,0)\cdot(\bv_h-\bv_0,\bw_h)
\\ \nonumber
&= \int_0^{r_0} \biggl[ (\sigma_0 - \sigma_h)^2 + \sigma_0 (\bw_h')^2 + \alpha_s h^{-2} \bw_h^2 + h^2 \bw_h''^2 + \int_{\bv_0/r}^{\bv_h/r} \W''(\xi) (\bv_h/r - \xi) \ud \xi
\biggr] r \dr.
\end{align}
We have already observed that $DF_h(\bv_h,\bw_h) = 0$, $D_vF_h(\bv_0,0)=0$, and also $|D_wF_h(\bv_0,0)\varphi| \le C_g \int_0^{r_0} |\varphi(r)| r \dr$.
We add the previous two relations and use these facts about $DF_h$ to obtain
\begin{equation}\label{e2.36}
\begin{aligned}
\int_0^{r_0} \biggl[ 2(\sigma_0 - \sigma_h)^2\! + (\sigma_h + \sigma_0) (\bw_h')^2\! + 2\alpha_s h^{-2} \bw_h^2 + 2h^2 \bw_h''^2
\!+\! \int_{\bv_h/r}^{\bv_0/r}\!\!\! \W''(\xi) &(\bv_0/r - \bv_h/r) \ud \xi \biggr] r \dr
\\
&\le C_g \int_0^{r_0} \bw_h r \dr.
\end{aligned}
\end{equation}

First observe that both $\sigma_h \ge 0$ and $\sigma_0 \ge 0$ -- the first relation comes from the Euler-Lagrange equation while the second can be derived either by direct computation or also through the~Euler-Lagrange equation. Next we observe that $\W'' \ge 0$ a.e., which follows from convexity of $\W$ and can be also explicitly computed. 
Hence, using that $\sigma_h \ge 0, \sigma_0 \ge 0, \W'' \ge 0$ we see that all the terms on the left-hand side of~\eqref{e2.36} are non-negative, in particular $\int_0^{r_0} 2\alpha_s h^{-2} \bw_h^2 r \dr \lesssim \int_0^{r_0} \bw_h r \dr$, which by H\"older's inequality implies $\int_0^{r_0} \bw_h^2 r \dr \lesssim \alpha_s^{-2} h^4$, which in turn shows that the right-hand side in~\eqref{e2.36} is at most of order $h^2$. 

Since the right-hand side in~\eqref{e2.37} is bounded by the left-hand side in~\eqref{e2.36}, we see that 
\begin{equation}\nonumber
 F_h(\bv_0,0) - F_h(\bv_h,\bw_h) - DF_h(\bv_h,\bw_h)\cdot(\bv_0-\bv_h,-\bw_h) \lesssim h^2. 
\end{equation}
Using that $DF_h(\bv_h,\bw_h)=0$, we get that $F_h(\bv_0,0) - F_h(\bv_h,\bw_h) \lesssim h^2$. The trivial 
fact that $F_h(\bv_0,0) \ge F_h(\bv_h,\bw_h)$ (which holds since $F_h(\bv_h,\bw_h)$ is the minimum over a 
larger class of test functions) permits us to conclude~\eqref{fhfh}.

The right-hand side in~\eqref{e2.36} being $\lesssim h^2$ also implies $\int_0^{r_0} \bw_h''^2 r \dr \lesssim 1$, which by an interpolation inequality $\| \nabla f \|_{L^4(B_{r_0})} \lesssim \| f \|_{L^2(B_{r_0})}^{1/4} \| \nabla^2 f \|_{L^2(B_{r_0})}^{3/4} + \| f \|_{L^2(B_{r_0})}$ 
implies $\int_0^{r_0} \bw_h'^4 r \dr \lesssim h^2$. We recall the definition of $\sigma_h,\sigma_0$, and see that $\int_0^{r_0} |\sigma_h - \sigma_0|^2 r \dr \lesssim h^2$ implies $\int_0^{r_0} |\bv_h' - \bv_0'|^2 r \dr \lesssim h^2$. Since $\W(\eta)$ is uniformly convex in the regime $\eta \ge -2\alpha_s^{1/2}$, meaning in the region where $\bv_0(r)/r$ lies well within this regime we get control in $L^2$ on the difference $(\bv_0(r) - \bv_h(r))/r$, by a standard embedding of $W^{1,2}$ into $L^p$ in $\R^2$, for any $p \in [1,\infty)$ we get $(\int_0^{r_0} |\bv_0 - \bv_h|^p r \dr )^{1/p} \le C(p) h$, and~\eqref{e2.15} follows. In addition, since for $r \in J:=(\min(r_w,r_0/2),r_0)$ we have $r \gtrsim 1$, we also see that $|\bv_h(r) - \bv_0(r)| \lesssim h$ when $r \in J$.

To obtain the formula for $\sigma_h$, we observe that $\bv_0(r) \le -2\alpha_s^{1/2}$ for $r \in (r_w,r_0)$ together with the above $L^\infty$ bound on $\bv_h - \bv_0$ implies, assuming $h$ is small enough, that $\bv_h(r) \le -2\alpha_s^{1/2}$ for $r \in ((2r_w+r_0)/3,r_0)$, which in turn gives $\W'(\bv_h(r)/r) = -4\alpha_s^{1/2}$ for $r \in ((2r_w+r_0)/3,r_0)$. Solving the Euler-Lagrange equation $(\sigma_h(r)r)' = \W'(\bv_h(r)/r)$ together with the corresponding boundary condition $\sigma(r_0)=0$, we immediately obtain the formula for $\sigma_h$.

To prove~\eqref{e2.1}, using~\eqref{fident} again we get for any $B \in L^1(0,r_0)$ and any $(\bv,\bw)$
\begin{align}\nonumber 
 &e = \int_0^{r_0} \biggl[ \left(\bv' + \oh\left(\frac{r}{R} - \bw'\right)^2 + B \right)^2 + \alpha_s h^{-2} |\bw|^2 + h^2 \left|\bw''-\frac12\right|^2\!\! +\! \W(\bv / r)\biggr] r \dr\! - \!F_h(\bv_h,\bw_h)
\\ \nonumber
&\ge
 \int_0^{r_0}\! \biggl[ \left(\!\bv' + \oh \left(\frac rR - \bw'\right)^2\!\! +\! B - \sigma_h \right)^2\!\!\! + \sigma_h \!\left(\bw' - \bw_h'\right)^2\!\! + \alpha_s h^{-2} (\bw - \bw_h)^2\!\! + h^2\! \left(\bw'' - \bw_h''\right)^2 \biggr]\! r \dr
\\ \nonumber
&\quad + \int_0^{r_0} 2\sigma_h(r) B(r) r \dr + \int_0^{r_0} \biggl( \int_{\bv_h(r)/r}^{\bv(r)/r} \W''(\xi) (\bv(r)/r-\xi) \ud \xi \biggr) \dr,
\end{align}
where the last term is non-negative by the convexity of $\W$. Using interpolation we get that $\int_0^{r_0} (\bw' - \bw_h')^2 r \dr \lesssim e$, which then implies $\int_0^{r_0} (\bv' - \bv_h')_+ r \dr \le \int_0^{r_0} |\bv' - \bv_h' + B| r \dr \lesssim e + e^{1/2} \lesssim e^{1/2}$. Arguing as above we can show that $\bv_h(r) >  -2\alpha_s^{1/2}$ for small (but not too small) values of $r$.
Combining this with the strict positivity of $\W''$, we deduce for such $r$ the smallness (in terms of $e$) of the $L^2$ norm of $\bv - \bv_h$. This can be upgraded using a Sobolev inequality to get 
$\int_0^{r_0} (\bv - \bv_h)_+^2 r \dr \lesssim e$. The proof of Proposition~\ref{prop} is now complete. 

\end{proof}

{
\section{The upper bound}\label{sec:ub}

\providecommand{\N}{\mathbb{N}}
\providecommand{\uo}{u_{\textrm{osc}}}
\providecommand{\wo}{w_{\textrm{osc}}}
\providecommand{\kopt}{k_{\textrm{opt}}}

To prove the upper bound, it is sufficient to define for each $h \in (0,\h)$ a deformation $(u_h,w_h)$ with the property $E_h(u_h,w_h) \le F_h(\bv_h,\bw_h) + \kappa(1/h) h$. Indeed, combining this with~\eqref{fhfh} gives the desired upper bound $E_h(u_h,w_h) \le F_0(\bv_0) + \kappa(1/h) h$ (with a possibly different $c_1$ in the~definition of $\kappa$). 
Since we do not attempt to get the optimal $\kappa$, it is enough to do this only for some  discrete set of $h$, which has $0$ as its limit point and has the property that neighboring $h$ differ at most by a factor of $2$. In practice, we will eventually restrict our attention to values of $h$ such that $h^{\delta - 1/2}$ is an integer, where the value of $\delta$ is close to $0$. 
To simplify the notation, we drop the~subscript $h$ from the deformation to be constructed, writing $(u,w)$ rather than $(u_h,w_h)$.

Our construction is guided by the proof of the lower bound. The basic idea is to modify the minimizer 
$(\bv_h,\bw_h)$ of $F_h$ by wrinkling where necessary, and estimate the increase in the energy due to wrinkling.
Using~\eqref{e2.28}, the amount of arclength we need to waste at each circle (together with the optimal length scale of 
wrinkling) can be read off from $(\bv_h,\bw_h)$. Nevertheless, learning from the proof of the lower bound we 
anticipate that we should not really use wrinkling with the optimal period, since it will be costly to change 
this period too often -- $B(r)$ would then be too large (in fact of order $O(1)$).
Anyway, as we explained in Section~\ref{sec:model}, it is not necessary for the wavenumber $k$ 
to take the ``optimal'' value $\alpha_s^{1/4} r/h$ at radius $r$. Rather, what matters is that 
$|\frac{hk}{r} - \alpha_s^{1/2} \frac{r}{kh}|$ be at most of order $h^{1/2}$. We shall achieve this 
by making the wrinkling modes appear/disappear over length scales of order $h^{1/2}$. 
It takes some time to motivate the~construction. We shall explain the key ideas in two passes.
\bigskip 

\noindent {\sc First pass.}
Our first pass is unsuccessful, but still informative. As noted above we propose to use just choices
of $k$ that are integer multiples of $h^{-1/2}$, changing from one $k$ to the next on a length scale of order $h^{1/2}$. Since the amplitudes of the modes change over scale $h^{1/2}$, one finds after some 
calculation that $B(r)$ is of order $h$ (which is OK). But what does it mean to ``change from 
one $k$ to the next?'' The obvious (though ultimately unsuccessful) idea is to 
use ``building blocks,''  as done for example 
in~\cite{benny-fissioning,BellaKohnDrapes,romandrapes}. The building
block between two radii (say, $r_1$ and $r_2$) would have single-mode wrinkling at the two extremes
($r=r_1$ and $r=r_2$) and a suitable interpolation in the middle. A standard approach to this
interpolation would be to take 
$w(r,\theta) = \bw_h(r) + f(r) k_1^{-1} \sin(k_1\theta) + g(r) k_2^{-1} \sin(k_2 \theta)$, where 
$k_1,k_2$ are the optimal choices at $r_1$ and $r_2$ respectively, and $f=1,g=0$ near $r_1$ while 
$f=0,g=1$ near $r_2$. Recall that $k_1, k_2 \sim h^{-1}$, while
by our choice of $r_1$ and $r_2$, $|k_1 - k_2| \sim h^{-1/2}$. To make the second term in 
the ``error'' $R_h$ (see~\eqref{Rh}) negligible one needs to choose $-\partial_\theta u_\theta$ to be 
approximately the deviation of $(\partial_\theta w)^2/2r$ from its average. 
Computing $(\partial_\theta w)^2 = f^2(r) \sin^2(k_1\theta) + g^2(r) \sin^2(k_2\theta) + 
2f(r)g(r) \sin(k_1\theta)\sin(k_2\theta)$, we see that the first two terms are of order 
$1$ with a period of order $h$ (since $k_1,k_2 \sim h^{-1}$), and therefore their contribution 
to $u_\theta$ (i.e. after integrating once in $\theta$) will be of order $h$. 
However the remaining term is problematic. Indeed, it can be written as
$f(r)g(r) [-\cos((k_1+k_2)\theta) + \cos((k_1-k_2)\theta))]$. The $(k_1 + k_2)$ term is not 
harmful by the same argument as above, however the $(k_1 - k_2)$ term is problematic. In fact 
it provides a term of order $h^{1/2}$ in the expression for $u_\theta$; this occurs because
$(k_1-k_2) \sim h^{-1/2}$ whereas $k_1$, $k_2$, and $k_1+k_2$ are all of order $h^{-1}$. 
Moreover, since $w$ changes with $r$ on length scale $h^{1/2}$, so should $u_\theta$. Thus
one expects $\partial_r u_\theta \sim 1$. This makes the third term in $R_h$ (the cross term) 
of order $1$ -- much too large. 

In summary: we need a better answer for what it should mean to ``change from one $k$ to the~next.'' 
\bigskip 

\noindent {\sc Second pass.} To get started we need some notation. 
We will consider an ansatz $(u,w)$ of the~form
$u_r(r,\theta) := \bv_h(r) + \uo(r,\theta)$ and $w(r,\theta) = \bw_h(r) + \wo(r,\theta)$,
where for each $r \in (0,r_0)$ we require $\int_0^{2\pi} \uo(r,\cdot) = \int_0^{2\pi} \wo(r,\cdot) = 0$. Then using~\eqref{e2.12} we get that
\begin{equation}\label{e3.1}
\begin{aligned}
 E_h(u,w)& - F_h(\bv_h,\bw_h) =
 \\
 &\int_0^{r_0} \biggl[ 2\sigma_h B(r) + B^2(r) + W_r\left(\frac{\bv_h(r)}{r}, w(r,\cdot) \right) - \W\left(\frac{\bv_h(r)}{r}\right) \biggr] r \dr + R_h(u,\xi),
\end{aligned}
\end{equation}
where recall that $B(r) = \fint_0^{2\pi} |\partial_r(\bar w - w)|^2 \dt$, $\sigma_h = \bv_h' + \frac12 (\frac rR - \bw_h')^2$, and $\xi(r,\theta) = w(r,\theta) - \frac{r^2}{2R}$. Using~\eqref{e2.28}, we know that $W_r - \W$ completely vanishes in the tensile region (i.e. where $\bv_h(r) / r \ge -2\alpha_s^{1/2}$ and $\wo = 0$) and will be small also in the rest of the domain (i.e. in the~wrinkled region) provided that both
$\sum_k a_k(r)^2 (k/r)^2$ is approximately equal to $- 2 (\bv_h(r)/r + 2\alpha_s^{1/2})$ and $(hk/r - \alpha_s^{1/2}r/(hk))^2$ is small in the support of $a_k$. The first condition 
says that the~wrinkling wastes the right amount of arclength, while the second condition says the wrinkling should be near
the optimal frequency $k = \alpha_s^{1/4}r/h$.

The key issue is how to avoid the problematic $\cos((k_1-k_2)\theta)$ term that appeared in $u_\theta$ in the first pass.
To explain the idea, we pretend for the moment that the~frequencies $k$ are allowed to be real-valued, not just integers.
Let us 
define $\wo(r,\theta) = A(r)r \int_\R h^{1/2} m[h^{1/2}(k - \alpha_s^{1/4} r/h)] \cos(k\theta)/k \ud k$, where $m$ is a 
non-negative smooth mask
(specifically: a nonnegative function supported in $(-1/2,1/2)$ such that $\int_\R m(t) \ud t = 1$). The function $A(r)$ 
modulates $w$ and should be chosen so that the wrinkled profile wastes the right amount of arclength.
For such $w$ we compute $(\partial_\theta w)^2 = hA^2(r) \int_\R \int_\R m[h^{1/2}(k - r/h)] m[h^{1/2}(l - r/h)] \sin(k\theta) \sin(l\theta) \ud k \ud l$. As before we want to compute the integral (in $\theta$) of this quantity, which has the form $\tfrac{1}{2} hA^2(r) \int_\R \int_\R m[h^{1/2}(k - r/h)] m[h^{1/2}(l - r/h)] (\tfrac{\cos((k+l)\theta)}{k+l} - \tfrac{\cos((k-l)\theta)}{k-l}) \ud k \ud l$. Focusing on the latter ``troublesome'' integral involving the term $\cos((k-l)\theta)/(k-l)$, we see that while its value is not small it {\it does not change} in $r$ (except for the dependence on $A$), which can be seen by the~change of variables $\hat k = k-r/h, \hat l = l-r/h$ in the double integral. Therefore, the contribution to $u_\theta$ from this part will be $r$-independent, and so $\partial_r u_\theta$ will not be too large. For discrete frequencies this problematic quantity will become $h$-periodic in $r$; the periodicity can be used to show that it is almost constant with very small derivative.

The argument just sketched \emph{almost} works. Unfortunately it doesn't quite work, since when the $r$-derivative of 
$u_\theta$ falls on the $A(r)$-term, one seems to need that $u_\theta$ itself is small -- which is unfortunately not true. 
To overcome this difficulty, the argument presented below includes a~further tweak -- it uses only frequencies that are
multiples of $h^{\delta - 1/2}$ for some $\delta > 0$. 
\bigskip 

In the rest of this section we use the preceding ideas to give an honest proof of the upper bound. 
To get started, we fix a small $\delta > 0$ and we require from now on that all the constants be independent of $\delta$. For $r \in (0,r_0)$ and $\theta \in [0,2\pi)$ we define $w(r,\theta) := \wo(r,\theta) + \bw_h(r,\theta)$ with
\begin{equation}\nonumber
 \wo(r,\theta) := A(r)r h^{\delta/2} \sum_{k>0} m\left[h^{\delta}k - \alpha_s^{1/4}rh^{-1/2}\right] \frac{\sqrt{2} \cos(kN\theta)}{kN},
\end{equation}
where $m$ should as before be a smooth non-negative mask, $A$ will be chosen later, and $N := h^{\delta-1/2}$ has without loss of generality an integer value.
Since in the following we will need estimates on derivatives of $m$, we make a particular choice $m(t) := \exp(-1/(1-4|t|^2))$ if $|t| < 1/2$ and $m=0$ elsewhere.
\providecommand{\wt}{\widetilde w_{\textrm{osc}}}

To estimate the excess energy, we now estimate term by term the right-hand side of~\eqref{e3.1}. Using definition and properties of $W_r$ and $\W$ (see~\eqref{Wenergy} and \eqref{e2.28}), we estimate difference of the third and fourth term in~\eqref{e3.1}:
\begin{align}
 W_r\biggl(\frac{\bv_h(r)}{r}&, w(r,\cdot) \biggr) - \W\left(\frac{\bv_h(r)}{r}\right) \nonumber
\\
&= \frac{1}{4} \left|\sigma(r) + 2(\frac{\bv_h(r)}{r}+2\alpha_s^{1/2})\right|^2 \label{eq5.1}
\\
&\quad + A^2(r) h^{\delta} \sum_k m^2\left[h^\delta k - \alpha_s^{1/4} rh^{-1/2}\right]\left(\frac{h^{1/2+\delta}k}{r} - \alpha_s^{1/2}\frac{r}{h^{1/2+\delta}k}\right)^2, \label{eq5.1+}
\end{align}
where $\sigma(r) = A^2(r) h^{\delta} \sum_{k>0} m^2[h^\delta k - \alpha_s^{1/4} rh^{-1/2}]$. To estimate~\eqref{eq5.1+} we observe that the~function $f(t) = h^{1/2+\delta}t/r - \alpha_s^{1/2} r/(h^{1/2+\delta}t)$ vanishes for $t=\alpha_s^{1/4} r/h^{1/2+\delta}$ and $|f'| \lesssim h^{1/2+\delta}/r$ nearby, and by the support condition for $m$ we see that~\eqref{eq5.1+} is bounded from above by $A^2(r)h^{\delta} h^{-\delta} [(h^{1/2+\delta}/r)h^{-\delta}]^2 = A^2(r)h/r^2$.  Since in the following we will choose $A$ such that $|A| \lesssim 1$ and $A$ will be supported away from the origin, we see that we will have $A^2(r)h/r^2 \lesssim h$.
To simplify the notation, from now on we will assume that $\alpha_s = 1$ (in the general case all the~subsequent constants might depend also on $\alpha_s$).

In order to make~\eqref{eq5.1} small, we would like to choose value of $A(r)$ such that $\widetilde \sigma(r) := A^2(r) h^{\delta} \int_\R m^2[h^\delta k - \alpha_s^{1/4} rh^{-1/2}]
\ud k = A^2(r) \int_\R m^2(\hat k) \ud \hat k = -2(\frac{\bv_h(r)}{r}+2\alpha_s^{1/2})$ if the right-hand side is positive (wrinkled region), and $A(r)=0$ if it is non-negative (non-wrinkled region).
We have introduced $\widetilde \sigma$ as a proxy (a less oscillating approximation) for $\sigma$, since we do not want $A$ to oscillate on scale $h^{\delta}$, which would be inevitable if we  defined $A$ using $\sigma(r) = -2(\frac{\bv_h(r)}{r}+2\alpha_s^{1/2})$. The advantage of using $\widetilde\sigma$ instead of $\sigma$ to define value of $A$ is that in this case $A^2$ is as smooth as $\bv_h$. 
In addition, since we will also need control on derivatives of $A$ (and not only $A^2$), we cut off $A$ on scale $h$ near the transition between the flat and wrinkled region (where derivatives of $A$ would be singular):

\begin{equation}\nonumber
 A(r) := \begin{cases} 0 & r \in (0,r_w), \\
\eta\left(\frac{r-r_w}{h}\right) \left(-2\left(\frac{\bv_h(r)}{r} + 2\alpha_s^{1/2}\right)\right)^{1/2} \left(\int_\R m^2\right)^{-1/2} & r \in (r_w,r_0),
        \end{cases}
\end{equation}
with $\eta$ being a smooth cutoff for $(2,\infty)$ in $(1,\infty)$ (i.e. $\eta(t)=1$ if $t>2$ and $\eta(t)=0$ if $t<1$). While without the cutoff $\eta$ the derivative $A'(r)$ would blow-up like $(r-r_w)^{1/2}$, due to this cutoff we see that $|A'| \lesssim h^{-1/2}$, and similarly $|A''| \lesssim h^{-3/2}$.

To estimate~\eqref{eq5.1}, we will use the following simple observation: For any smooth compactly supported function $f$ 
and any $n \in \N$ there exists $C$, which depends on the support of $f$, such that for any $t \in (0,1)$ and any shift $\zeta \in \R$ we have
\begin{equation}\label{eq6.1}
\biggl|t\sum_{k\in \mathbb{Z}} f(tk+\zeta) - \int_\R f \biggr| \le Ct^n\left\|f^{(n)}\right\|_{\infty} .
\end{equation}
Indeed, for $n=0$ this holds more generally for $m$-th derivative of $f$, since $\int_\R f^{(m)}=0$ implies 
\begin{align*}
\biggl|t\sum_{k\in \mathbb{Z}} f^{(m)}(tk+\zeta) - \int_\R f^{(m)} \biggl| &= t\biggl|\sum_{k\in \mathbb{Z}} f^{(m)}(tk+\zeta)\biggl| \le t \#\left\{ k : f^{(m)}(tk+\zeta) \neq 0 \right\} \left\| f^{(m)} \right\|_{\infty}
\\
&\le C\left\| f^{(m)} \right\|_{\infty},
\end{align*}
where $\| g \|_\infty := \sup |g|$. Moreover, for any $m \ge 1$ we have 
\vspace{-2pt}
\begin{align*}
\biggl|\sum_{k\in \mathbb{Z}} f^{(m)}(tk+\zeta)\biggl| &= \biggl|\sum_{k\in \mathbb{Z}} f^{(m)}(tk+\zeta) - t^{-1}\int_\R f^{(m)}\biggl| = \biggl|\sum_{k\in \mathbb{Z}} f^{(m)}(tk+\zeta) - f^{(m)}(tk+\zeta')\biggl| 
\\
&\le t \biggl|\sum_{k\in \mathbb{Z}} f^{(m+1)}(tk+\zeta'')\biggl|,
\end{align*}
and so by induction we get~\eqref{eq6.1} for any derivative of $f$. In addition, since the previous chain of equalities (except for the first one) holds also for $m=0$, we get~\eqref{eq6.1} also for $f$ itself, which finishes the proof of~\eqref{eq6.1}.

This observation, applied to $f=m^2$ with $n \ge 1/\delta$, yields after a change of variables the~estimate 
\begin{align*}
|\sigma(r) - \widetilde \sigma(r)| &= A^2 |h^{\delta} \sum_k m^2[h^\delta k] - \int_\R m^2[h^\delta k] \ud k| 
\\
&\le C A^2 h^{\delta} (h^{\delta})^{n-1} \| (m^2)^{(n)} \|_{\infty}\le C\| (m^2)^{(n)} \|_{\infty}h \le (C_m/\delta)^{(C_m'/\delta)} h,  
\end{align*}
where in the last step we used an estimate for $\| (m^2)^{(n)} \|_{\infty}$. Using the triangle inequality we have $|\sigma(r) + 2(\frac{\bv_h(r)}{r}+2\alpha_s^{1/2})|^2 \lesssim |\widetilde \sigma(r) + 2(\frac{\bv_h(r)}{r} + 2\alpha_s^{1/2})| + |\sigma(r) - \widetilde\sigma(r)|^2$. Since the first term on the right-hand side vanishes if $r-r_w \not \in (0,2h)$ and is $O(1)$ inside this interval, the previous estimate on $\sigma(r) - \widetilde \sigma(r)$ implies that~\eqref{eq5.1} is bounded by $(C_m/\delta)^{(C_m'/\delta)} h$.

Next we turn to $B(r) = \fint_0^{2\pi} |\partial_r \wo|^2 \dt = h^{\delta} \sum_k (kN)^{-2} [\partial_r(A(r)rm[h^\delta k-rh^{-1/2}])]^2$. From now on we will repeatedly use the fact that due to the support condition on $m$, we have $m[h^\delta k-rh^{-1/2}] \neq 0$ only for at most $h^{-\delta}$ values of $k$, and $kN \gtrsim (r/h)$. Due to this, and also since $|A'| \lesssim h^{-1/2}$, we see that $|B(r)| \lesssim h^\delta h^{-\delta} (r/h)^{-2} (h^{-1/2})^2 \lesssim h$, where the last $h^{-1/2}$ comes from derivative in $r$.
Hence in the wrinkled region $|B(r)| \lesssim h$ and $B=0$ otherwise, and so $\int_0^{r_0} (\sigma_h B(r) + B(r)^2) r \dr \lesssim h$.

We now estimate the remainder $R_h$, which consists of five terms:
\begin{equation}\nonumber
\begin{aligned}
 &\int_0^{r_0} \fint_0^{2\pi} \biggl| \partial_r (u_r-\bar u_r) + \frac{(\partial_r \xi)^2}{2} - \fint_0^{2\pi} \frac{(\partial_r \xi)^2}{2}\biggl|^2 \dt\ r \dr
 \\
 &+ \int_0^{r_0} \fint_0^{2\pi} \biggl|\frac{\partial_\theta u_\theta}{r} + \frac{u_r}{r} + \frac{(\partial_\theta w)^2}{2r^2} - \frac{\bar u_r}{r} - \fint_0^{2\pi} \frac{(\partial_\theta w)^2}{2r^2} \biggr|^2 \dt\ r \dr
\\
& +\! \int_0^{r_0}\! \biggl( \fint_0^{2\pi}
\!\! \frac 12 \biggl|\frac{1}{r} \partial_\theta u_r\! +\! r \partial_r \left(\frac{u_\theta}{r}\right) + \frac{1}{r} \partial_r \xi \partial_\theta \xi\biggl|^2\! + h^2 |\partial_{rr}(w - \bw)|^2\! +\! \frac{2h^2}{r^2} |\partial_{\theta r} \xi|^2 \dt\ \!\!\biggr) r \dr =: \sum_{i=1}^5 T_i.
\end{aligned}
\end{equation}
To do the estimates we first define $u_\theta$ and $u_r$:
\begin{align*}
 u_{\theta,+}(r,\theta) &:= A^2(r) r^2 h^{\delta} \sum_{k,l} m\left[h^\delta k-rh^{-1/2}\right]m\left[h^\delta l-rh^{-1/2}\right]\frac{\sin((k + l)N\theta)}{(k+l)N},
\\
 u_{\theta,-}(r,\theta) &:= A^2(r) r^2 h^{\delta} \sum_{k\neq l} m\left[h^\delta k-rh^{-1/2}\right]m\left[h^\delta l-rh^{-1/2}\right]\frac{\sin((k- l)N\theta)}{(k-l)N},
\\ u_\theta(r,\theta) &:= \frac{1}{2r} (u_{\theta,+} - u_{\theta,-}), \qquad\qquad\qquad u_r(r,\theta) := \frac{r}{R} \wo(r,\theta) + \bar u_r(r).
\end{align*}


To estimate $T_1$, we use definition of $\wo$, together with the support condition on $m$ and the fact $kN \gtrsim h^{-1}$, to see that $|\wo| \lesssim h^{1-\delta/2}$ and $|\partial_r \wo| \lesssim h^{1/2-\delta/2}$. Therefore we immediately see that $|\partial_r(u_r-\bar u_r)| \le \frac 1R |\wo| + \frac rR |\partial_r \wo| \lesssim h^{1/2-\delta/2}$. Using $\xi(r,\theta) = \wo(r,\theta) + \bw_h(r) - \frac{r^2}{2R}$ we get
$(\partial_r \xi)^2 - \fint_0^{2\pi} (\partial_r \xi)^2 = (\partial_r \wo)^2 - \fint_0^{2\pi} (\partial_r \wo)^2 + 2 \partial_r \wo \partial_r \bw_h - 2\frac rR \partial_r \wo$, which by previous estimates on $\wo$ and $\partial_r \wo$ combined with estimate on $\partial_r \bw_h$ (see~\eqref{e2.15}) implies $|T_1| \lesssim h^{1-\delta}$.
Using definition of $u_\theta$ we see that $\partial_\theta u_\theta + [(\partial_\theta w)^2 - \fint_0^{2\pi} (\partial_\theta w)^2]/2r = 0$, which together with the~fact $(u_r - \bar u_r)/r = \wo/R$ and estimate $|\wo|\lesssim h^{1-\delta/2}$ implies $|T_2| \lesssim h^{2-\delta}$.

To estimate $|T_3|$ we first focus on $|\partial_r(u_\theta/r)|$, by dealing with the $+$ and $-$ part separately. For the $+$ part we have
\begin{align*}
 \partial_r (&u_{\theta,+}/r^2) =\\
 &h^{\delta} \biggl( \partial_r (A^2(r)) \sum_{k,l} m[\ldots]m[\ldots] \frac{\sin((k+l)N\theta)}{(k+l)N} +
 A^2(r) \sum_{k,l} \partial_r(m[\ldots]m[\ldots])\frac{\sin((k+l)N\theta)}{(k+l)N}\biggr),
\end{align*}
where here and below we use an abbreviation $m[\ldots]m[\ldots] = m[h^\delta k-rh^{-1/2}]m[h^\delta l-rh^{-1/2}]$.  
Since the summation is performed over $h^{-2\delta}$ pairs of $k,l$, $|\partial_r (A^2)| \lesssim 1$, and $(k+l)N \gtrsim h^{-1}$, the~first half $\lesssim h^\delta h^{-2\delta} h \le h^{1-\delta}$. The second half is different since the derivative falls on $m$, which introduces  additional factor $h^{-1/2}$, leading to the estimate $\lesssim h^{1/2-\delta}$. Altogether we see that $|\partial_r (u_{\theta,+}/r^2)| \lesssim h^{1/2-\delta}$.
For the $-$ part we have
\begin{align*}
 \partial_r (&u_{\theta,-}/r^2) =\\
 &h^{\delta} \biggl( \partial_r (A^2(r)) \sum_{k\neq l} m[\ldots]m[\ldots] \frac{\sin((k-l)N\theta)}{(k-l)N} +
 A^2(r) \sum_{k\neq l} \partial_r(m[\ldots]m[\ldots])\frac{\sin((k-l)N\theta)}{(k-l)N}\biggr). 
\end{align*}
Using arguments as above and $N = h^{-(1/2-\delta)}$ the first half $\lesssim h^\delta h^{-2\delta} N^{-1} = h^{1/2-2\delta}$. This idea would not be enough for the second half, since the additional factor $h^{-1/2}$ would ruin the estimate. Instead, for fixed $\theta$ let us consider $f(r) := h^{3\delta}\sum_{k\neq l} m[h^\delta k - rh^{-1/2}]m[h^\delta l - rh^{-1/2}]\frac{\sin((k-l)N\theta)}{(k-l)N}$ and observe that by a change of variables $f(r)$ is $h^{1/2+\delta}$-periodic.
Moreover, we see that for any $n \in \N$ we have $|f^{(n)}| \le (C_m n)^{(C_m'n)}h^{(1-n)/2}$, where we used estimates on $m^{(n)}$ and the fact that each derivative introduces a factor $h^{-1/2}$. Let us now fix $n \ge 1$, and observe that $f^{(n)}$ has to have mean zero (due to periodicity of $f^{(n-1)}$), in particular there exists $t \in (0,h^{1/2+\delta})$ such that  $f^{(n)}(t)=0$.
Using the bound on $f^{(n+1)}$ and first-order Taylor expansion, we get for any $s \in (0,h^{1/2+\delta})$ that $|f^{(n)}(s)| \le \|f^{(n+1)}\|_{\infty} h^{1/2+\delta} \le (C_m n)^{(C_m'n)}h^{(-n)/2} h^{1/2+\delta} \le (C_m n)^{(C_m'n)} h^{\delta+(1-n)/2}$. We can now iterate such an estimate, and since in each step we improve the exponent by $\delta$, after $1/(2\delta)$ steps we get that $|f'|\le (C_m n)^{(C_m'n)}h^{1/2}$ with $n \sim 1/\delta$, which then leads to an estimate $|\partial_r (u_{\theta,-}/r^2)|\le (C_m/\delta)^{(C_m'/\delta)} h^{1/2-2\delta}$. The estimate $|\partial_r (u_\theta/r)|\le (C_m/\delta)^{(C_m'/\delta)} h^{1/2-2\delta}$ 
follows immediately.

Using that $\xi(r,\theta) = \wo(r,\theta) + \bw_h(r) - r^2/2R$ and the definition of $u_r$, we see that
\begin{equation*}
 \partial_\theta u_r + \partial_r \xi \partial_\theta \xi = (\partial_r \wo + \partial_r \bw_h)\partial_\theta \wo.
\end{equation*}
Since $|\partial_\theta \wo| \lesssim h^{-\delta/2}$, which we get by direct computation, this in combination with the estimate on $\partial_r \wo$ and $\partial_r \bw_h$ yields $|\partial_\theta u_r + \partial_r \xi \partial_\theta \xi|\lesssim h^{1/2-\delta}$, and so $|T_3| \lesssim h^{1-4\delta}$.

To deal with $T_4$, we observe that $\partial_{rr}(w-\bar w) = \partial_{rr} \wo$, which then using bounds on $A'$ and $A''$ can be estimated by $C h^{-(1+\delta)/2}$, which together with the additional $h^2$ factor gives $|T_4| \lesssim h^{1-\delta}$.

Finally, we see that $|\partial_{r\theta} \wo| \lesssim h^{-1/2-\delta/2}$, and $|T_5| \lesssim h^{1-\delta}$ immediately follows. Altogether we have shown that the excess energy is bounded by $(C_m/\delta)^{(C_m'/\delta)} h^{1-4\delta}$. By choosing $\delta := (-\log h)^{-1/2}$ we obtain an estimate $\epsilon \le \exp\big(C(\log(1/h))^{1/2} \log (\log(1/h)) \big) h$, and the proof of the upper bound is complete.

With this choice of $\delta$, we also observe that a simple estimate $|\partial_\theta \wo| \le C h^{-\delta/2}$ turns into $|\partial_\theta \wo| \le C \exp( (\log 1/h)^{1/2} )$. Hence, though the slopes are not uniformly bounded in $h$, they explode with a rate slower than any power of $h$.



\section*{Acknowledgement}

Support is gratefully acknowledged from NSF grants DMS-1311833 and
OISE-0967140 (RVK), and from DFG grant BE 5922/1.1 (PB).

This project was initiated as a result of discussions with Benny Davidovitch and Gregory Grason, which we gratefully acknowledge. Those discussions took place at the Kavli Institute for Theoretical Physics, during a spring 2016 program on Geometry, Elasticity, Fluctuations, and 
Order in 2D Soft Matter (supported by NSF through grant PHY11-25915). We also gratefully acknowledge input from the anonymous referees, whose comments led to substantial improvements in the presentation of our results. PB was a postdoc at the Max Planck Institute for Mathematics in the Sciences (Leipzig) when this work was begun.

\bibliographystyle{amsplain}
\bibliography{bella-with-additions}

\end{document}